\documentclass[12pt]{amsart}
\usepackage{amsfonts,amssymb,amsmath,amsthm}
\usepackage{fullpage}
\usepackage{wasysym}

\usepackage{tikz}

\newcommand{\co}{\colon}

\def\R{\mathbb{R}}
\def\Z{\mathbb{Z}}
\def\N{\mathbb{N}}

\newcommand{\ep}{\varepsilon}
\newcommand{\de}{\delta}
\newcommand{\pd}{\partial}
\newcommand{\ga}{\gamma}
\newcommand{\la}{\lambda}

\newcommand{\be}{\begin{equation}}
\newcommand{\ee}{\end{equation}}

\newcommand{\Cone}{\operatorname{Cone}}
\newcommand{\maxcol}{\operatorname{MaxCol}}
\newcommand{\dist}{\operatorname{dist}}

\newcommand{\E}{\mathcal E}
\newcommand{\B}{\mathcal B}

%\newcommand{\A}{\mathcal A}

%\numberbysection

\newtheorem{theorem}{\bf Theorem}[section]

\newtheorem{lemma}[theorem]{Lemma}

\theoremstyle{definition}
\newtheorem{definition}[theorem]{Definition}

\theoremstyle{remark}

\numberwithin{equation}{section}

%\numberbysection

\begin{document}
%\date{\today}

\title{Examples of exponentially many collisions in a hard ball system}

\author{Dmitri Burago}                                                          
\address{Dmitri Burago: The Pennsylvania State University,                          
Department of Mathematics, University Park, PA 16802, USA}                      
\email{burago@math.psu.edu}                                                     
                                                                                
\author{Sergei Ivanov}
\address{Sergei Ivanov:
St.\ Petersburg Department of Steklov Mathematical Institute,
Russian Academy of Sciences,
Fontanka 27, St.Petersburg 191023, Russia}
\email{svivanov@pdmi.ras.ru}

\thanks{The first author was partially supported
by NSF grant DMS-1205597.
The second author was partially supported by
RFBR grants 17-01-00128 and 20-01-00070.
}

\keywords{Hard ball gas model, number of collisions, examples with many collisions}

\subjclass{37D50, 70F35}

\begin{abstract}
Consider the system of $n$ identical hard balls in $\R^3$
moving freely and colliding elastically.
We show that there exist initial conditions such that
the number of collisions is exponential in $n$.
\end{abstract}

\maketitle

\section{Introduction}

Consider the system of $n$ identical hard balls
moving freely and colliding elastically.
Since long ago the problem of counting the number of collisions
that may occur between the balls has been extensively studied
for both the system of balls confined to a box and in open space.
The problem of estimating the number of collisions
goes back to Boltzmann.
Mathematically it had been proposed by Ya.~A.~Sinai,
see \cite{G81}.
It has been studied by many mathematicians.

Denote by $\maxcol(n,d)$ the maximum number of collisions that may occur
between $n$ identical balls in $\R^d$ where simultaneous collisions
are prohibited.
This number is always finite.
The fact that the number of collisions for any initial data is finite
has been shown by Vaserstein \cite{Vas79} and Galperin \cite{G81}.
The fact that $\maxcol(n,d)$ is finite has been shown by
D. Burago, Ferleger and Kononenko \cite{BFK98c}, see also~\cite{B}.
In fact,  Theorem 1.3 in \cite{BFK98c} provides a (rough) estimate
$\maxcol(n,d)\le (32 n^{2/3})^{n^2}$ for all $d$.

Many authors studying hard ball systems used the following observation.
Instead of studying the motion of balls, that is, their centers
in $\R^d$, one can put all their coordinates together as a $dn$-tuple and study the 
motion of this point in $\R^{dn}$. 
Note that some points of $\R^{dn}$ have to be removed. 
Namely, for each pair of balls there is a set of points which corresponds to configurations of balls 
where these two balls overlap. 
These sets are cylinders; in particular, they are convex.
We denote by $\B_{d,n}$ the complement of the union of these cylinders;
it is the configuration space of our system. %, which we call the billiard table,
It well known that the motion of the system of balls is represented
by the billiard dynamics in~$\B_{d,n}$.
Namely, $\B_{d,n}$ is a billiard table whose walls are the boundaries of the cylinders
and the usual billiard laws govern the motion exactly
corresponding to the dynamics of the $n$ balls in $\R^d$.
We forbid trajectories hitting singularities (intersections of two or more walls),
since they correspond to simultaneous collisions in the ball system.
%An important observation is that the walls are convex with respect to the normals pointing inside. 
%Such billiard systems are called semi-dispersing. 
%Moreover, all cylinders have a lot of common points (say, the origin), which for
%instance prevents the situation when the trajectory keeps bouncing between two parallel cylinders.
The bounds obtained in \cite{BFK98c} do not study the system of balls directly
but rather by analyzing billiard trajectories in complements of unions of convex bodies.
Earlier Ya.~Sinai \cite{S78} has shown
that in a polyhedral cone there is a uniform upper bound
(for all trajectories) for the number of collisions with walls.

Not much was known about the lower bounds on $\maxcol(n,d)$.
It is easy to see that $\maxcol(n,1)=\frac{n(n-1)}{2}$ and it is monotone in~$d$.
If one allows different masses of balls, even in $d=1$ the situation becomes
more complicated, see e.g.~\cite{G81}.
Beyond the trivial lower bound $\maxcol(n,d)\ge\frac{n(n-1)}{2}$,
the first result we know is by Thurston and Sandri \cite{TS64} stating that
that $\maxcol(3,2)\ge 4$ (which is not obvious).
As a matter of fact, $\maxcol(3,d)=4$ for all $d\ge 2$, see \cite{MC93} and references therein.
A cubic lower bound for $\maxcol(n,2)$ is obtained in~\cite{BD18}.
This seems to be all that has been known so far.

The main result of this paper is the following theorem.

\begin{theorem}\label{t:3D}
$\maxcol(n,3) \ge 2^{\lfloor n/2\rfloor}$ for all $n\ge 3$.
\end{theorem}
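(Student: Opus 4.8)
The plan is to prove the lower bound by exhibiting a single initial configuration of positions and velocities whose billiard trajectory in $\B_{3,n}$ undergoes at least $2^{\lfloor n/2\rfloor}$ wall reflections (collisions) before any geometric degeneration. Since we only want a lower bound, one good trajectory suffices, and we never need to bound collisions from above. I would first record the standard reduction: it is enough to realize the required number of collisions on a finite time interval, arrange that no two collisions occur simultaneously (a codimension-one condition that can be destroyed by an arbitrarily small perturbation of the initial data, which only helps since collision counts are lower semicontinuous along such perturbations), and verify that all balls have pairwise disjoint interiors at $t=0$. The heart of the construction is a recursive doubling scheme indexed by the $\lfloor n/2\rfloor$ pairs into which I group the balls, with at most one leftover ball serving as the initial messenger.

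The core object I would build is a two-ball \emph{gadget} with a prescribed input/output behavior: when a ``messenger'' enters the gadget moving inward, the gadget (i) routes the messenger so that it triggers everything downstream of it, (ii) after a return signal comes back from downstream, redirects that signal so that the downstream block is triggered a \emph{second} time, and (iii) finally emits a single return signal back upstream, leaving the gadget in a reset state. Granting such a gadget, let $f(k)$ be the number of collisions produced by a chain of $k$ gadgets on a single trigger. Because the $k$-th gadget causes the downstream chain of $k-1$ gadgets to fire twice, the counting recursion is $f(k)\ge 2f(k-1)+c$ for a constant $c\ge 0$, with base case $f(1)\ge 2$ (indeed $f(0)\ge 1$ counting the terminal reflection). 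Induction then yields $f(k)\ge 2^{k}$, and taking $k=\lfloor n/2\rfloor$ gives the claimed $2^{\lfloor n/2\rfloor}$. The appearance of $\lfloor n/2\rfloor$ in the exponent is thus exactly the number of available pairs, each contributing one factor of $2$.

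The substance of the argument is realizing the gadget geometrically with \emph{identical} masses in $\R^3$. Here the extra dimensions are essential: a single equal-mass elastic collision reverses the along-centers component of the relative velocity but preserves the transverse component, so one collision can never turn a ball around, and a naive ``mirror'' ball simply absorbs all forward momentum and runs away. This forces the use of a \emph{pair}: one ball plays the role of a reflector while its partner catches it and sends it back, so that the gadget returns to a usable state. I would choose impact parameters and the small transverse offsets (available because we are in $\R^3$, not on a line) so that the emergent collision angles route the messenger and the return signal along the intended paths, and so that distinct gadgets are spatially separated enough not to interfere while still allowing signals to pass between them. All of this is to be set up with explicit, checkable inequalities on the initial positions, velocities, and the ball radius.

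The main obstacle I anticipate is not the combinatorics of the recursion, which is clean, but the \emph{timing and realizability} of the gadget: one must guarantee that within each gadget the three collision events (entry, re-trigger, exit) happen in the correct temporal order, that the return signal from downstream arrives while the reflector is correctly positioned to redirect it, that no spurious collisions occur between balls of different gadgets, and that the whole choreography can be initialized from a genuinely non-overlapping configuration. I would isolate these as a finite list of open geometric conditions and verify they are simultaneously satisfiable, using scale invariance and the freedom in the transverse offsets to decouple the constraints; once a configuration satisfying all strict inequalities is produced, a final small perturbation removes any simultaneous collisions, completing the proof.
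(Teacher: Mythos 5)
Your proposal reduces the whole theorem to the existence of a reusable two-ball ``gadget'' (route the messenger downstream, redirect the returning signal downstream a second time, emit one signal upstream, and return to a reset state), and that gadget is never constructed --- but it is precisely where the entire difficulty of the theorem lives. Worse, the specification as stated runs into a concrete kinematic obstruction. For identical masses, an elastic collision \emph{swaps} the velocity components along the line of centers and preserves the transverse components. So to reverse a signal ball, the reflecting ball must already be moving toward it with roughly the opposite velocity, and after the exchange the reflector flies away carrying the signal's old momentum: every non-tangential reversal consumes a prepared fast ball and produces a runaway one. A gadget with two balls therefore cannot reverse signals more than a bounded number of times while returning to a reset state --- each reflection of the messenger deposits momentum of the order of the messenger's momentum into the gadget, so after a few cycles the gadget recedes and can no longer intercept anything. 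Yet your recursion forces the innermost gadget to fire on the order of $2^{k-1}$ times, each firing requiring an exact (not merely open-condition) restoration of its state so that the downstream block produces the \emph{same} collision count on the second trigger; exact state restoration after an exponentially long, level-dependent waiting time is an overdetermined requirement that a ``finite list of open geometric conditions'' cannot deliver. The only escape from the momentum bookkeeping is to make the collisions nearly tangential, but nearly tangential collisions barely change directions and hence cannot implement the head-on routing your messenger scheme needs; this tension is why pre-existing ballistic/routing constructions gave only polynomial bounds.

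The paper's proof takes a fundamentally different route that is designed around exactly this obstruction: all $\sim 2^{n/2}$ collisions happen in an arbitrarily small neighborhood of a \emph{single} singular boundary point $q$ of the configuration space $\B_{3,n}$, and almost all of them are nearly tangential (tiny momentum exchange per collision, so no accumulation problem). One passes to the tangent cone $\Cone(q)$ (Lemma \ref{l:from-cone-to-balls} shows a cone trajectory with $N$ collisions yields $\maxcol(n,3)\ge N$), chooses $q$ to be a chain of $n$ balls with contact angles perturbed slightly off $\frac\pi2$ (Lemma \ref{l:perturb-q}), and encodes the billiard by the wall-distance functions $f_i(t)=\langle\ga(t),\nu_i\rangle$, whose reflection law involves only the Gram matrix of the normals. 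The exponential count then comes not from signal routing but from a combinatorial cascade: in the model matrix $A_m$ of \eqref{e:defA}, each zero of $f_{2k-1}$ flips the slope of $f_{2k}$, so the zero set of each successive function is an arithmetic progression roughly \emph{twice as long} as the previous one (Lemma \ref{l:inductive-construction}); perturbation and rescaling lemmas (Lemmas \ref{l:perturbA}, \ref{l:rescaleA}, \ref{l:generalized-trajectory}) transport this model back to an actual Gram matrix and hence to a genuine billiard trajectory. So your combinatorial doubling instinct matches the paper's, but the mechanism realizing it must be the degenerate, almost-right-angled cone at a singular configuration rather than spatially separated reflecting gadgets, and without that idea your argument has a genuine gap at its central step.
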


Note that the lower bound in Theorem \ref{t:3D} and the upper bound from \cite{BFK98c}
have a large gap between them but at least they are both poly-exponential. Making a better
match after we went above polynomials seem not so interesting, there is little hope
to make them match exactly, and the gap between $\exp(t)$ and $\exp(2t)$ is
also huge. In fact, we prove a somewhat better lower bound which is though 
more cumbersome, see \eqref{e:final-computation}.  To make the lower- and 
upper-bounds closer to each other, one now probably should rather concentrate
on upper bounds, there obviously is some room for improvement. 

In the proof of Theorem \ref{t:3D} we construct a trajectory with 
the desired number of collisions defined on a bounded time interval.
The continuation of this trajectory may not be defined on the entire $\R$
due to a simultaneous collision.
By a small perturbation of the initial data one can obtain a trajectory
which is defined on the entire $\R$ and with at least the same number of collisions.
Indeed, such initial data form a set of full measure in the phase space. % ссылка???

The collisions in our construction occur in a very small neighborhood of one singular point on the boundary
of the configuration space (billiard table) $\B_{3,n} \subset \R^{3n}$.
We find an appropriate
singular point $q$ on the boundary of $\B_{3,n}$ and consider the tangent cone to $\B_{3,n}$ at~$q$.
The point $q$ is such that the billiard system in the cone has a trajectory with the number of collisions
we need.
By applying a homothety this trajectory can be moved arbitrarily close to
the origin of the cone. 
Then it is easy  to see that there is a nearby trajectory
in $\B_{3,n}$ with the same number of collisions, see Lemma \ref{l:from-cone-to-balls}.
The point $q$ must have very special properties.

One can see that the tangent cone to $\B_{d,n}$ at any point
$q\in\pd\B_{d,n}$
is a polyhedral cone
whose faces correspond to pairs of touching balls
in the configuration represented by~$q$.
Furthermore, the angles between faces
are bounded away from~0. In our examples the number of faces equals $m=n-1$
and the angles between faces are very close to~$\frac\pi2$.
Note that, in a cone with $m$ faces where all angles are equal to~$\frac\pi2$,
every billiard trajectory experiences no more than $m$ collisions. 
Nonetheless, it turns out that an arbitrarily small change of angles
can result in a cone admitting a billiard trajectory with exponentially many collisions,
see Lemma \ref{l:almost-right-angles}.
Using this fact we first prove a model Theorem \ref{t:ndim} 
which shows that $\maxcol(n,n-1)\ge 2^{n-1}-1$.
Its proof already contains most of the principal ideas
of the main construction.

A number of open questions are left:

1. So far we were unable to prove an analog of Theorem \ref{t:3D} in dimension~2.
The reason is the lack of flexibility in constructing configurations
with prescribed angles, like the one depicted on Figure \ref{f:qhat}.

2. We do not know any interesting lower and upper bounds on the measure
of the configurations in the phase space resulting in a large number of collisions.
(For the sake of normalization, the energy and a cube to which the positions of balls
are confined to must be fixed).
The word ``large'' is vague and could mean e.g.\ some polynomial or exponential bounds.
An upper bound on the measure would be particularly interesting.

As a matter of fact, analogous problems are more interesting not in the 
whole $\R^d$
but rather in a box where the density of balls is small enough. Of course, 
then the number
of collision is counted in unit time or by averaging $N(T)/T$. This allows one
to think about dynamical characteristics like entropies (see \cite{BFK98c}, 
\cite{BFK98d}). 

3. It seems that, if the number of collisions is ``large'',
then the overwhelming number of collisions are almost tangential. This
problem had been posed in a preprint of this paper and was essentially 
answered in \cite{DS}, using a completely different set of tools. A
Physicist would call such collisions  ``inessential''
in the sense that
they result in almost zero exchange of momenta,
energy, and directions of velocities of the balls.
However, for a Dynamical System person they may look very essential, for the
analogs of Lyapunov exponents are huge. Thus theoretically such collisions 
could make a non-trivial contribution to metric entropy (which is rather unlikely) 
or to topological entropy (which is quite possible). Note that, under reasonable 
assumptions, the topological entropy is finite \cite{BFK98d}, though the proof uses 
a compactness argument in addition to Alexandrov Geometry of $k \leq 0$, and
probably no reasonable formula for the upper bound  is known or at least can be 
found in the literature. It seems that, to answer such 
questions, one needs to look at Question 2 above along with the above mentioned
estimate on the number of almost tangental collisions in \cite{DS}.

\medskip
{\bf Notation.}
Throughout the paper we denote by $\N$ the set of positive integers,
by $\R_+$ the set of nonnegative reals, and by $\R_+^m$ the set $(\R_+)^m\subset\R^m$.
The symbol $\langle\,,\rangle$ denotes the Euclidean scalar product in $\R^m$.
For a piecewise linear function $f$ defined on an interval,
we denote by $f'(t_+)$ and $f'(t_-)$ the right and left derivatives of $f$ at~$t$.

\section{Tangent cones}
\label{sec:cones}

Consider a hard ball gas system of $n$ identical balls in~$\R^d$.
Without loss of generality we set the radii of the balls to be $\frac12$.
We denote the centers of the balls by $q_1,\dots,q_n$.
Recall that we regard a collection $(q_1,\dots,q_n)\in(\R^d)^n$ as a point $q\in\R^{dn}$.
Conversely, for a point $q\in\R^{dn}$ we denote by $q_1,\dots,q_n$
its $d$-dimensional components.
Denote by $\B_{d,n}$ the configuration space of the system, that is,
$\B_{d,n}\subset\R^{dn}$ is defined by
$$
\B_{d,n} = \{q \in\R^{dn} : |q_i-q_j|\ge 1 \text{ for all $i\ne j$} \} .
$$
This set corresponds to configurations of balls with disjoint interiors.
It is the complement of the union of round cylinders
$$
 C_{ij}=\{q\in\R^{dn} : |q_i-q_j|<1\},\qquad 1\le i<j\le n .
$$
We refer to the boundaries $\pd C_{ij}$ of these cylinders as walls.
Recall that the evolution of a system of balls corresponds to the billiard dynamics in~$\B_{d,n}$.
We consider billiard trajectories defined on various intervals
with no collisions at endpoints.
Let a trajectory $\gamma$ hit a wall at a moment~$t$
and let $\nu$ be the unit normal to the wall at $\gamma(t)$.
%and  by $\ga'(t_+)$ and $\gamma'(t_-)$ the left and right
%velocities of $\ga$ at $t$. 
Then the rule ``the angle of reflection
equals the angle of incidence'' takes the form
\be\label{e:angle-of-reflection}
 \ga'(t_+) = \gamma'(t_-) - 2\langle \gamma'(t_-),\nu\rangle \nu .
\ee

\begin{definition}\label{d:cone}
Let $q\in\pd \B_{d,n}$.
We denote by $\Cone(q)$ the tangent cone of $\B_{d,n}$ at $q$
defined as follows.
The point $q$ belongs to several cylinders.
They have unit outer normal vectors at $q$
referred to as {\em normals} and denoted by $\nu_1,\dots,\nu_m$.
The tangent cone $\Cone(q)$ is the set of vectors $v\in \R^{dn}$
such that $\langle v,\nu_i\rangle\ge 0$ for all $i\in\{1,\dots,m\}$.
\end{definition}

According to this definition, $\Cone(q)$ is a convex polyhedral cone
(with cone's origin at~0)
whose faces are contained in hyperplanes orthogonal to $\nu_1,\dots,\nu_m$.
If $q\in\pd C_{ij}$, $i<j$, and $\nu \in \R^{dn}$ is the normal to $C_{ij}$ at $q$,
then
\be\label{e:normal}
\nu = \frac 1{\sqrt2} (0,\dots,0, q_i-q_j,0,\dots,0,q_j-q_i,0,\dots,0),
\ee
where the nonzero entries $q_i-q_j$ and $q_j-q_i$ are at the $i$th and $j$th positions,
respectively.
To avoid case chasing below, we use the notation $C_{ij}$ for
$i>j$ as well, that is, $C_{ij}=C_{ji}$.
In the case when $i>j$ the formula for $\nu$ is similar to \eqref{e:normal}.
In both cases the $i$th $d$-dimensional component of $\nu$ equals $q_i-q_j$,
the $j$th one equals $q_j-q_i$, and all other components are zero.

The scalar products of the normals can be computed as follows.
If $q\in \pd C_{ij}\cap \pd C_{lk}$ 
and $\nu_1$ and $\nu_2$ are the normals to $C_{ij}$ and $C_{lk}$ at~$q$, then
\be\label{e:zero-angles}
 \langle \nu_1,\nu_2\rangle = 0 \quad\text{if  $\{i,j\}\cap\{l,k\}=\emptyset$}.
\ee
If $i=l$, then
\be\label{e:normal-angles}
 \langle \nu_1,\nu_2\rangle = \frac12 \langle q_j-q_i,q_k-q_i \rangle .
\ee
The first case corresponds to configurations where two disjoint pairs of balls
touch simultaneously and in the second case the $i$th ball touches the $j$th and $k$th ones.
Recall that such configurations never occur in the dynamics we study.
The cases when $i=k$, $j=l$, or $j=k$, 
reduce to \eqref{e:normal-angles} by swapping indices in $C_{ij}$ and $C_{kl}$.

The tangent cone has a nonempty interior.
Indeed, if $q\in \pd C_{ij}$ and $\nu$ is the corresponding normal
then, by \eqref{e:normal},
$$
 \langle q, \nu\rangle = \frac1{\sqrt 2} |q_i-q_j|^2= \frac1{\sqrt 2}> 0.
$$
Hence, in the notations of Definition \ref{d:cone}, the vector $q$ has positive
scalar products with the normals $\nu_1,\dots,\nu_m$ and thus belongs
to the interior of $\Cone(q)$.

\begin{lemma}\label{l:from-cone-to-balls}
Let $q\in\B_{d,n}$ and $N\in\N$ be such that
there is a billiard trajectory in $\Cone(q)$
with $N$ collisions.
Then $\maxcol(n,d)\ge N$. 
%there exists a billiard trajectory in $\B_{d,n}$ with at least $N$ collisions.
\end{lemma}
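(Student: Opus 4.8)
The plan is to treat $\Cone(q)$ as the blow-up of $\B_{d,n}$ at $q$ and to exploit the scale invariance of the cone: shrinking $\B_{d,n}$ towards $q$ makes it converge to $\Cone(q)$, and a billiard trajectory that meets the walls transversally persists under such a convergence. Let $\ga$ be the given billiard trajectory in $\Cone(q)$ with $N$ collisions, defined on a compact interval $I$. By our conventions $\ga$ has no simultaneous collisions and avoids the singular set of $\Cone(q)$, so each of its $N$ collisions takes place at an interior point of a single face; and each such reflection is automatically transversal, i.e.\ $\langle\ga'(t_-),\nu_i\rangle\ne 0$ at the collision with the $i$th wall, since a tangential hit on a flat face would force the whole segment into that face. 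Between consecutive collisions $\ga$ runs along straight segments in the interior of $\Cone(q)$.

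First I would set up the blow-up. Writing $g_{ij}(x)=|x_i-x_j|^2-1$, we have $\B_{d,n}=\{x:g_{ij}(x)\ge 0\ \text{for all }i<j\}$, and $q$ lies exactly on those walls $\pd C_{ij}$ with $g_{ij}(q)=0$, whose outer unit normals at $q$ are $\nu_1,\dots,\nu_m$ as in Definition \ref{d:cone}. For $\la>0$ set $D_\la=\la^{-1}(\B_{d,n}-q)$. As $\la\to 0$ the rescaled walls through $q$ converge, on every fixed ball, to the hyperplanes $\{v:\langle v,\nu_i\rangle=0\}$ bounding $\Cone(q)$; this is just the first-order Taylor expansion of the smooth convex functions $g_{ij}$ at $q$, and the unit normals of the rescaled walls converge uniformly to $\nu_1,\dots,\nu_m$. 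The remaining walls $\pd C_{kl}$ with $g_{kl}(q)>0$ sit at distance bounded below from $q$, so after rescaling by $\la^{-1}$ they recede to infinity and miss any fixed compact set once $\la$ is small. Hence $D_\la\to\Cone(q)$ in the $C^1$ sense on compact sets.

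Next I would propagate $\ga$ through this convergence. At a transversal, non-singular collision the reflection law \eqref{e:angle-of-reflection}, together with the position and time of the following collision, depends continuously on the incoming position and velocity and on the unit normal of the wall hit, as long as that next collision is again transversal and non-singular. Since $\ga$ has exactly $N$ such collisions and travels in the open interior in between, applying this continuity $N$ times together with $D_\la\to\Cone(q)$ shows: for all sufficiently small $\la$, the billiard trajectory in $D_\la$ issued from data close to $(\ga(0),\ga'(0))$ stays uniformly close to $\ga$ on $I$, hits the same sequence of walls, and undergoes exactly $N$ collisions. Scaling back by the factor $\la$ and translating by $q$ turns this into a billiard trajectory in $\B_{d,n}$ with $N$ collisions. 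By the correspondence recalled in Section~\ref{sec:cones} such a trajectory is a motion of the $n$ balls in $\R^d$ with $N$ elastic collisions, and therefore $\maxcol(n,d)\ge N$.

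The main obstacle is the uniformity of this shadowing over the whole finite orbit: one must choose a single $\la$ small enough that the entire perturbed trajectory—all $N+1$ segments and all $N$ reflection points—remains where $D_\la$ is $C^1$-close to $\Cone(q)$, while ensuring that no collision is lost, that no spurious collision is created near the edges where two faces of $\Cone(q)$ meet, and that transversality survives at every step. This is precisely the continuity of the $N$-fold billiard map under a $C^1$ perturbation of the boundary at a transversal, non-degenerate orbit; the openness of the transversality and non-singularity conditions is what makes the argument go through, but keeping the closeness constant from degrading across the $N$ successive reflections is the point requiring care.
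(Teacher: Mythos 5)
Your proposal is correct and takes essentially the same route as the paper's proof: the paper rescales $\B(\la)=\la(\B_{d,n}-q)$ with $\la\to\infty$ (your $D_\la$ with $\la\to 0$, up to renaming), establishes the same $C^1$ convergence of the rescaled walls to the flat faces of $\Cone(q)$ via graph functions, and then runs your shadowing step as an explicit induction over the $N$ collisions before scaling back. The uniformity concern you flag at the end is precisely what the paper's collision-by-collision induction (with intermediate times $\tau_0<\tau_1<\dots<\tau_N$ separating the reflections) resolves.
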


\begin{proof}
Let $W_1,\dots,W_m$ be the walls of $\B_{d,n}$ (that is, boundaries of the cylinders)
that contain~$q$ and $\nu_1,\dots,\nu_m$ their normals at~$q$.
Let
$$
\overline W_i=\{ x\in \R^{dn} : \langle x,\nu_i\rangle=0 \}, \qquad i=1,\dots,m ,
$$
be the respective walls of the cone $K:=\Cone(q)$.
Note that the walls $W_i$ do not intersect
the interior of $K$ due to the convexity of the cylinders.

Let $\ga\co (a,b)\to K$ be a billiard trajectory in the cone
with $N$ collisions at moments $a<t_1<\dots<t_N<b$
with walls $\overline W_{i_1},\dots,\overline W_{i_N}$, respectively.
For every $\la>0$, consider a rescaled set
$
 \B(\la) := \la (\B_{d,n}-q) .
$
It is bounded by the walls $W_i(\la):=\la(W_i-q)$.
We send $\la$ to infinity, fix $t_0\in(a,t_1)$ and consider a billiard
trajectory $\ga_\la$ in $\B(\la)$ with the initial conditions
$\ga_\la(t_0)=\ga(t_0)$ and $\ga_\la'(t_0)=\ga'(t_0)$.

The walls $W_i(\la)$ converge
to $\overline W_i$ as $\la\to\infty$ in $C^1$ topology on compact sets.
To avoid lengthy discussion of general submanifold convergence, we use
the following ad hoc definition in our special case.
For every $\la>0$, the rescaled wall $W_i(\la)$ is a codimension~1
smooth submanifold of~$\R^{dn}$, it contains 0, and its tangent
hyperplane at~0 is $\overline W_i$.
Hence a part of $W_i(\la)$ near 0 is a graph of a smooth function
$f_{i,\la}\co U_{i,\la} \to (\overline W_i)^\perp\simeq\R$
where $U_{i,\la}$ is a neighborhood of~0 in~$\overline W_i$,
$f_{i,\la}(0)=0$, and $df_{i,\la}(0)=0$. Since $W_i(\la)$ is
the $\la$-rescaled copy of $W_i(1)$, we can express $f_{i,\la}$
in terms of $f_{i,1}$ as follows:
$$
 U_{i,\la} = \la U_{i,1}
$$
and
$$
 f_{i,\la}(x) = \la f_{i,1}(\la^{-1}x) , \qquad x\in  U_{i,\la}.
$$
These formulae imply that for any compact set $D\subset\overline W_i$,
the domains $U_{i,\la}$ cover $D$ for all sufficiently large $\la$
and the restriction $f_{i,\la}|_D$ goes to zero in $C^1(D)$ as $\la\to\infty$.
This is what we mean by convergence of $W_i(\la)$ to $\overline W_i$.

Fix a sequence $t_0=\tau_0<\tau_1<\dots<\tau_N<b$ such that
$t_k\in(\tau_{k-1},\tau_k)$ for all $k=1,\dots,N$.
We claim that the trajectories $\ga_\la$ converge to $\ga$ in the following sense:
for every $k$ one has $\ga_\la(\tau_k)\to\ga(\tau_k)$ and
$\ga'_\la(\tau_k)\to\ga'(\tau_k)$ as $\la\to\infty$.
We prove this by induction in~$k$. The claim is trivial
for $k=0$. Assume that it holds for $k-1$ in place of $k$
and consider the first moment $t_k^0(\la)>\tau_{k-1}$
when $\ga_\la$ hits the flat wall $\overline W_{i_k}$.
The inductive hypothesis implies that
for all sufficiently large $\la$, the moment
$t_k^0(\la)$ exists,
the interval of $\ga_\la$ between $\tau_{k-1}$ and $t_k^0(\la)$
is a straight line segment (i.e., the trajectory does not hit any walls),
and $t_k^0(\la)\to t_k$ as $\la\to\infty$.
%and this segment converges to the segment
%between $\ga(\tau_{k-1})$ and $\ga(t_k)$.
Therefore $\ga_\la(t_k^0(\la))\to\ga(t_k)$,
and the left derivative of $\ga_\la$ at $t_k^0(\la)$
converges to that of $\ga$ at~$t_k$
as $\la\to\infty$.

Recall that $W_{i_k}(\la)$ is the graph of a smooth function
$f_{i_k,\la}$ defined over a large region in~$\overline W_{i_k}$,
and the functions $f_{i_k,\la}$ tend to 0 along with their derivatives
as $\la\to\infty$.
By an elementary analysis it follows that $\ga_\la$ hits
$W_{i_k}$ at some moment $t_k(\la)>t^0_k(\la)$
such that $t_k(\la)-t^0_k(\la)\to 0$ as $\la\to\infty$.
Thus  $t_k(\la)\to t_k$,  $\ga_\la(t_k(\la))\to \ga(t_k)$
and $\ga_\la'(t_k(\la)_-)\to \ga'({t_k}_-)$ as $\la\to\infty$.
The tangent direction of $W_{i_k}(\la)$ at $\ga_\la(t_k(\la))$
converges to the direction of $\overline W_{i_k}$
since it is determined by the first derivative of $f_{i_k,\la}$.
Hence the velocity of $\ga_\la$ after the collision with
also converges: $\ga_\la'(t_k(\la)_+)\to \ga'({t_k}_+)$ as $\la\to\infty$.
If $\la$ is large enough, it follows that
$\ga_\la$ does not hit any walls on the interval $[t_k(\la),\tau_k]$,
$\ga_\la(\tau_k)\to\ga(\tau_k)$ and
$\ga'_\la(\tau_k) \to \ga'(\tau_k)$. This completes the induction step
and thus proves the claim. % that $\ga_\la\to\ga$.

Moreover the argument implies that,
for a sufficiently large $\la$,
the trajectory $\ga_\la$ is well-defined on an interval
$(\tau_0,\tau_N)$ and experiences $N$ collisions
with walls $W_{i_1}(\la),\dots,W_{i_N}(\la)$ in this order.

Rescaling everything back, we obtain that there is a billiard trajectory
$\widetilde\ga$ in $\B_{d,n}$, namely the one defined by
$\widetilde\ga(t) = q+\la^{-1}\ga_\la(t)$
for a sufficiently large $\la$,
that experiences $N$ collisions on the interval $(t_0,t_N+\ep)$.
\end{proof}

Now we describe a simple example with exponentially many collisions
in high dimensions.
We do this mainly to facilitate understanding.
This example is not used
in the proof of the main theorem.
We begin with the following lemma.

\begin{lemma}\label{l:almost-right-angles}
For every $m\in\N$ and $\ep>0$ there exist a polyhedral cone $K\subset\R^m$
with $m$ faces and such that

1. All pairwise angles between faces of $K$ belong to $(\frac\pi2-\ep,\frac\pi2+\ep)$.

2. There exists a billiard trajectory $\ga\co\R\to K$ with $2^m-1$ collisions.
\end{lemma}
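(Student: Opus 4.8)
The plan is to argue by induction on $m$, realizing the recursion $N(m)=2N(m-1)+1$ with $N(1)=1$, which gives $N(m)=2^m-1$. In the trajectory produced this way the $i$-th face will be hit $2^{m-i}$ times, so the collision sequence is the ``ruler'' (Tower of Hanoi) pattern, palindromic in time. The base case $m=1$ is the half-line $\R_+\subset\R$ with its single endpoint face: a point entering with negative velocity and reflecting once is a trajectory with $2^1-1=1$ collision, and condition~1 is vacuous.

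For the inductive step, suppose $K'\subset\R^{m-1}$ is a cone with $m-1$ faces with normals $\tilde\nu_1,\dots,\tilde\nu_{m-1}$, pairwise angles within $\ep$ of $\frac\pi2$, carrying a billiard trajectory $\ga'$ with $2^{m-1}-1$ collisions whose terminal velocity $v_{\mathrm{out}}\ne 0$ is known. I would build $K\subset\R^{m-1}\times\R=\R^m$ as follows. The first $m-1$ faces are the cylinders over the faces of $K'$, i.e.\ their normals are $(\tilde\nu_i,0)$; these reproduce $K'$ in the $\R^{m-1}$-factor and leave the last coordinate $x_m$ completely decoupled from faces $1,\dots,m-1$. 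The new $m$-th face gets a unit normal $\nu_m=(p,c)$ only slightly tilted from $e_m$, with $p\in\R^{m-1}$ chosen parallel to $v_{\mathrm{out}}$ and of small norm $|p|=\eta$, so $c=\sqrt{1-\eta^2}\approx 1$. Then $\langle \nu_m,(\tilde\nu_i,0)\rangle=\langle p,\tilde\nu_i\rangle=O(\eta)$, while the angles among the first $m-1$ faces are inherited from $K'$; hence all pairwise angles lie within $\ep$ of $\frac\pi2$ once $\eta$ is small, giving condition~1.

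The trajectory $\ga$ is then assembled in three stages. On the first stage the $\R^{m-1}$-components run $\ga'$ (contributing $2^{m-1}-1$ collisions with faces $1,\dots,m-1$) while $x_m$ descends linearly and fast toward the $m$-th face, timed to reach it exactly when $\ga'$ has completed. The single collision with face $m$ is the crux: reflection subtracts $2\langle v,\nu_m\rangle\nu_m$ from $v=(v_{\mathrm{out}},v_m)$, so the $\R^{m-1}$-part changes only along $p\parallel v_{\mathrm{out}}$, by $-2(\langle v_{\mathrm{out}},p\rangle+v_mc)p$. Choosing the descent speed $v_m\approx |v_{\mathrm{out}}|/\eta$ (allowed, since the cone's angles constrain $\eta$ but not the trajectory's speed) makes this change equal $-2v_{\mathrm{out}}$, i.e.\ sends $v_{\mathrm{out}}\mapsto -v_{\mathrm{out}}$ at the same $\R^{m-1}$-position, while reversing $x_m$ so it now recedes fast from face $m$. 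By time-reversibility of billiards, the $\R^{m-1}$-motion therefore retraces $\ga'$ backwards, producing another $2^{m-1}-1$ collisions, for a total of $2(2^{m-1}-1)+1=2^m-1$.

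The main thing to verify, and where I expect the real work to lie, is that $\ga$ is a genuine trajectory in $K$ with no unintended collisions. One must check that $\langle \ga(t),\nu_m\rangle>0$ throughout both runs, so that face $m$ is met exactly once; this holds because the large, dominant $x_mc$ term keeps this quantity positive away from the designed instant. One also uses that the decoupling of $x_m$ from faces $1,\dots,m-1$ makes the backward retrace exact and confined to $K'$ in the $\R^{m-1}$-factor. Finally one records the terminal velocity of $\ga$ (namely the reverse of the initial velocity of $\ga'$) to feed the next induction step, and notes that all hits can be kept transversal and away from lower-dimensional intersections of walls, so that $\ga$ extends to all of $\R$ with exactly $2^m-1$ collisions.
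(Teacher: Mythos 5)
Your proposal is correct and takes essentially the same route as the paper: both realize the step as the product cone $K'\times\R$ cut by one nearly horizontal wall which the extended trajectory (the old one plus a steep linear descent in the last coordinate) hits \emph{orthogonally} --- indeed, if you carry out your reflection computation, your tuning $|v_m|\approx c\,|v_{\mathrm{out}}|/\eta$ makes the incoming velocity exactly antiparallel to $\nu_m$, so the full velocity reverses and the motion retraces itself, giving $2(2^{m-1}-1)+1=2^m-1$ collisions, just as in the paper (where the new normal is simply taken to be the negative of the terminal unit velocity). The only nit is a sign: for the trajectory to actually reach the new wall while descending, $p$ must be \emph{antiparallel} to $v_{\mathrm{out}}$ (with $p$ parallel your tuned $v_m$ comes out positive and the wall is never hit), which is a trivially fixable slip.
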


\begin{proof}
We argue by induction in $m$.
The base $m=1$ is trivial.
The induction step is from $m$ to $m+1$.
Let $K\subset\R^m$ be a cone from the induction hypothesis and
$\ga\co\R\to K$ a billiard trajectory with $N:=2^m-1$ collisions.
Let $t_1<\dots<t_N$ be the moments of these collisions.

Consider the cone $K\times\R\subset\R^{m+1}$ and
observe that for any two constants $C_0,C_1\in\R$ the path
${\overline\ga\co\R\to K\times\R}$ defined by
\be\label{e:gammabar}
  \overline\ga(t) = (\ga(t), C_1 - C_0t ) \in K\times\R
\ee
is a billiard trajectory in $K\times\R$.
We choose $C_0>0$ so large that the vector
$$
 v := - \frac {\overline\ga'(t)}{|\overline\ga'(t)|}, \qquad t>t_N,
$$
forms an angle smaller than $\ep$ with the last coordinate vector of $\R^m\times\R$.

Define a cone $\widehat K\subset\R^{m+1}=\R^m\times\R$ by 
$$
\widehat K = \{ x\in K\times\R : \langle x, v\rangle \ge 0 \} .
$$
This is a polyhedral cone with $m+1$ faces forming pairwise angles
between $\frac\pi2-\ep$ and $\frac\pi2+\ep$.
Denote by $W$ the newly added wall of this cone, 
that is, 
$$
W=\{x\in\widehat K : \langle x, v\rangle=0\} .
$$
We construct a billiard trajectory $\widehat\ga\co\R\to\widehat K$
with $2N+1=2^{m+1}-1$ collisions as follows.
Choose $C_1>0$ in \eqref{e:gammabar} so large that
$\langle\overline\ga(t_N+1),v\rangle \ge 0$.
This ensures that $\overline\ga(t)\in \widehat K$ for all $t\in(-\infty, t_N+1]$.
Then $\overline\ga$ hits $W$ at some moment $t_{N+1}\ge t_N+1$
and it hits $W$ orthogonally.
Then the path $\widehat\ga\co\R\to \widehat K$ defined by
$$
 \widehat\ga(t) =
 \begin{cases}
  \overline\ga(t) , \qquad & t\le t_{N+1}, \\
  \overline\ga(2t_{N+1}-t) , \qquad & t\ge t_{N+1},
 \end{cases}
$$
is a billiard trajectory in $\widehat K$ with $2N+1$ collisions.
This completes the induction step.
\end{proof}

\begin{theorem}\label{t:ndim}
$\maxcol(n,n-1)\ge 2^{n-1}-1$.
\end{theorem}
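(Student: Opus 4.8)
The plan is to combine Lemma~\ref{l:almost-right-angles} with Lemma~\ref{l:from-cone-to-balls}. Setting $m=n-1$, I fix a small $\ep>0$ (to be constrained below) and let $K_0\subset\R^{n-1}$ be the cone produced by Lemma~\ref{l:almost-right-angles}, with unit inner normals $\mu_1,\dots,\mu_{n-1}$ and a billiard trajectory carrying $2^{n-1}-1$ collisions. I would then look for a configuration $q$ of $n$ balls in $\R^{n-1}$ whose tangent cone $\Cone(q)$ has normals with the \emph{same} Gram matrix as $\mu_1,\dots,\mu_{n-1}$; once this is achieved, the trajectory transfers to $\Cone(q)$ and Lemma~\ref{l:from-cone-to-balls} yields $\maxcol(n,n-1)\ge 2^{n-1}-1$.

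The transfer step is essentially formal. Both the wall conditions defining a polyhedral cone and the reflection law \eqref{e:angle-of-reflection} are expressed purely through scalar products with the normals, so the billiard dynamics in such a cone depends only on the Gram matrix of its unit normals. Since all pairwise angles of $K_0$ lie in $(\frac\pi2-\ep,\frac\pi2+\ep)$, its Gram matrix $G=(\langle\mu_i,\mu_j\rangle)$ is within $O(\ep)$ of the identity, hence positive definite, so $\mu_1,\dots,\mu_{n-1}$ are linearly independent and span $\R^{n-1}$. If I can produce $q$ whose tangent-cone normals $\nu_1,\dots,\nu_{n-1}$ satisfy $\langle\nu_i,\nu_j\rangle=G_{ij}$ and are likewise independent, then the linear isometry of $\operatorname{span}\{\mu_i\}=\R^{n-1}$ onto $\operatorname{span}\{\nu_i\}$ sending $\mu_i\mapsto\nu_i$ carries the trajectory of $K_0$ to a billiard trajectory in $\Cone(q)$ with the same sequence of $2^{n-1}-1$ collisions.

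It remains to realize $G$, and this is the crux. I would use a \emph{star} configuration: a central ball $q_1$ together with peripheral balls $q_2,\dots,q_n$, each touching $q_1$ and no two of them touching, so that the touching pairs are exactly $(1,j)$ and $\Cone(q)$ carries the $n-1$ normals $\nu_2,\dots,\nu_n$. Writing $u_j=q_j-q_1$ (a unit vector, since touching balls of radius $\frac12$ have centers at distance $1$), formula \eqref{e:normal-angles} gives $\langle\nu_j,\nu_k\rangle=\frac12\langle u_j,u_k\rangle$ for $j\ne k$, while $\langle\nu_j,\nu_j\rangle=1$. Thus matching $G$ amounts to choosing unit vectors $u_2,\dots,u_n$ in $\R^{n-1}$ whose Gram matrix is $2G-I$. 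Because $G$ is within $O(\ep)$ of the identity, so is $2G-I$; for $\ep$ small it is positive definite of size $(n-1)\times(n-1)$, hence realizable as the Gram matrix of $n-1$ vectors in $\R^{n-1}$. This is exactly where the dimension $n-1$ enters: $n-1$ peripheral balls supply precisely enough freedom to prescribe the full normal Gram matrix. Setting $q_1=0$ and $q_j=u_j$ then gives the desired configuration, and I would finally check that it lies in $\B_{n-1,n}$, i.e.\ that distinct peripheral balls do not overlap, which holds because $|u_j-u_k|=\sqrt{2-2\langle u_j,u_k\rangle}=\sqrt{2-4G_{jk}}>1$ once $\ep$ is small enough.

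The main obstacle I anticipate is precisely this realization: one must verify that the near-identity Gram matrix forced by Lemma~\ref{l:almost-right-angles} can be matched by genuine ball centers without creating unwanted coincidences (extra touchings or overlaps among the peripheral balls), and that the resulting normals stay independent so the dynamics transfers verbatim. The remaining pieces — the formal transfer of the trajectory and the final invocation of Lemma~\ref{l:from-cone-to-balls} — are then routine.
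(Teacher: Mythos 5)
Your proposal is correct and takes essentially the same approach as the paper: the paper's proof likewise uses a star configuration (a central ball at the origin touched by $n-1$ peripheral balls, no other contacts) whose unit center vectors realize the prescribed scalar products $\langle q_i,q_j\rangle=2\langle u_i,u_j\rangle$, verifies $|q_i-q_j|>1$ for the peripheral pairs, and transfers the trajectory of Lemma~\ref{l:almost-right-angles} via the isometry $\Cone(q)\cong K\times\R^k$ before invoking Lemma~\ref{l:from-cone-to-balls}. Your Gram-matrix formulation (realizing $2G-I$ as a Gram matrix of unit vectors, with the positive-definiteness check spelled out) is just a more explicit rendering of the paper's parenthetical remark that the vectors form a basis close to an orthonormal one.
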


\begin{proof}
For $m=n-1$ and a sufficiently small $\ep>0$
construct a cone $K\subset\R^{n-1}$ as in Lemma~\ref{l:almost-right-angles}.
Let $u_1,\dots,u_{n-1}$ be the inner normals of faces of $K$.
If $\ep$ is sufficiently small then there exist
unit vectors $q_1,\dots,q_{n-1}\in\R^{n-1}$ 
such that $\langle q_i,q_j\rangle =2\langle u_i,u_j\rangle$
and $|q_i-q_j|>1$ for all $i\ne j$.
(They form a basis of $\R^{n-1}$ close to an orthonormal one).

Set $d=n-1$ and consider the configuration of balls in $\R^{n-1}$
with centers at $q_1,\dots,q_{n-1}$, and $q_n=0$.
In this configuration the $n$th ball touches all other balls
while the other ones do not touch each other.
Hence the point $q\in \B_{n-1,n}$ belongs to the walls $\pd C_{in}$, $i=1,\dots,n-1$.
Let $\nu_1,\dots,\nu_{n-1}$ be the normals to these walls at~$q$.
Then, by \eqref{e:normal-angles} and the construction of~$q$,
$$
  \langle \nu_i,\nu_j \rangle = \frac12 \langle q_i,q_j\rangle = \langle u_i,u_j\rangle,
  \qquad 1\le i<j\le n-1.
$$
Hence the frame $(\nu_1,\dots,\nu_{n-1})$ is isometric to
the frame $(u_1,\dots,u_{n-1})$.
Therefore the cone $\Cone(q)$ is isometric to $K\times\R^k$ for a suitable $k\in\N$.
Since $K$ admits a billiard trajectory with $2^{n-1}-1$ collisions,
so does $\Cone(q)$.
This and Lemma \ref{l:from-cone-to-balls} imply that there exists
a billiard trajectory in $\B_{n-1,n}$ with at least $2^{n-1}-1$ collisions.
Theorem \ref{t:ndim} follows.
\end{proof}

\section{An example in $\R^3$}

In this section we prove Theorem \ref{t:3D}. Therefore $d=3$.
We fix $n\ge 2$ for the rest of this section.
Our goal is to construct a trajectory of a system of $n$ identical balls in $\R^3$
with exponentially many collisions.
All collisions in our construction occur near a special configuration 
$\widehat q=(\widehat q_1,\dots,\widehat q_n)\in\R^{3n}$
defined as follows: we set $\widehat q_1=(0,0,0)\in\R^3$ and,
for $2\le i\le n$,
$$
 \widehat q_i =
 \begin{cases}
  (k,k-1,0) &\text{if $i=4k-2$, $k\in\Z$}, \\
  (k,k-1,-1) &\text{if $i=4k-1$, $k\in\Z$}, \\
  (k,k,0) &\text{if $i=4k$, $k\in\Z$}, \\
  (k,k,1) &\text{if $i=4k+1$, $k\in\Z$}. 
 \end{cases}
$$
This configuration is illustrated on Figure \ref{f:qhat}.
One sees that $\widehat q\in \B_{3,n}$ and $\widehat q$ has exactly $n-1$ pairs
of contacting balls.
We connect each pair of contacting balls by a segment
and denote these segments by $\widehat u_1,\dots,\widehat u_{n-1}$
as follows:
\begin{align*}
\widehat u_1 &=[\widehat q_1,\widehat q_2], \\
\widehat u_{2k}&=[\widehat q_{2k},\widehat q_{2k+1}],
\qquad k=1,2,\dots,\lfloor (n-1)/2\rfloor, \\
\widehat u_{2k+1}&=[\widehat q_{2k},\widehat q_{2k+2}],
\qquad k=1,2,\dots,\lfloor (n-2)/2\rfloor .
\end{align*}

\begin{figure}[h]
\begin{tikzpicture}[scale=0.7]
\def\hs{2.5}
\filldraw (0,0) circle (3pt) ;
\filldraw (\hs,-1) circle (3pt) ;
\filldraw (\hs,-3) circle (3pt) ;
\filldraw (2*\hs,0) circle (3pt) ;
\filldraw (2*\hs,2) circle (3pt) ;
\filldraw (3*\hs,-1) circle (3pt) ;
\filldraw (3*\hs,-3) circle (3pt) ;
\filldraw (4*\hs,0) circle (3pt) ;
\filldraw (4*\hs,2) circle (3pt) ;
\filldraw (5*\hs,-1) circle (3pt) ;
\filldraw (5*\hs,-3) circle (3pt) ;
\filldraw (7*\hs,0) circle (3pt) ;
\draw (0,0)--(\hs,-1)--(2*\hs,0)--(3*\hs,-1)--(4*\hs,0)--(5*\hs,-1)--(5.5*\hs,-0.5) ;
\draw (\hs,-1)--(\hs,-3) ;
\draw (2*\hs,0)--(2*\hs,2) ;
\draw (3*\hs,-1)--(3*\hs,-3) ;
\draw (4*\hs,0)--(4*\hs,2) ;
\draw (5*\hs,-1)--(5*\hs,-3) ;
\draw (6*\hs,-0.5) node {$\cdots$} ;
\draw (6.5*\hs,-0.5)--(7*\hs,0) ;
\draw (7*\hs,0) node[above] {$\widehat q_n$} ;
\draw (6.5*\hs,-0.5) node[below] {$\qquad\widehat u_{n-1}$} ;
\draw (0,0) node[above] {$\widehat q_1$} ;
\draw (\hs,-1) node[above] {$\widehat q_2$} ;
\draw (\hs,-3) node[below] {$\widehat q_3$} ;
\draw (2*\hs,0) node[below] {$\widehat q_4$} ;
\draw (2*\hs,2) node[above] {$\widehat q_5$} ;
\draw (3*\hs,-1) node[above] {$\widehat q_6$} ;
\draw (3*\hs,-3) node[below] {$\widehat q_7$} ;
\draw (4*\hs,0) node[below] {$\widehat q_8$} ;
\draw (4*\hs,2) node[above] {$\widehat q_9$} ;
\draw (5*\hs,-1) node[above] {$\widehat q_{10}$} ;
\draw (5*\hs,-3) node[below] {$\widehat q_{11}$} ;
\draw (0.5*\hs,-0.5) node[below] {$\widehat u_1$} ;
\draw (1*\hs,-2) node[right] {$\!\widehat u_2$} ;
\draw (1.5*\hs,-0.5) node[below] {$\widehat u_3$} ;
\draw (2*\hs,1) node[right] {$\!\widehat u_4$} ;
\draw (2.5*\hs,-0.5) node[below] {$\widehat u_5$} ;
\draw (3*\hs,-2) node[right] {$\!\widehat u_6$} ;
\draw (3.5*\hs,-0.5) node[below] {$\widehat u_7$} ;
\draw (4*\hs,1) node[right] {$\!\widehat u_8$} ;
\draw (4.5*\hs,-0.5) node[below] {$\widehat u_9$} ;
\draw (5*\hs,-2) node[right] {$\!\widehat u_{10}$} ;
\end{tikzpicture}
\caption{The configuration $\widehat q$ for $n=4k$ and its graph of ball contacts. 
This is a projection of a 3-dimensional configuration to the plane.
The points $\widehat q_1,\widehat q_2,\widehat q_4,\widehat q_6,\widehat q_8,\widehat q_{10},\dots$
lie in the $xy$-plane.
The points $\widehat q_5,\widehat q_9,\dots$ lie above $\widehat q_4,\widehat q_8,\dots$, respectively,
and $\widehat q_3,\widehat q_7,\widehat q_{11},\dots$ 
are beneath $\widehat q_2,\widehat q_6,\widehat q_{10},\dots$.
All the segments have unit lengths and meet at right angles.}
\label{f:qhat}
\end{figure}
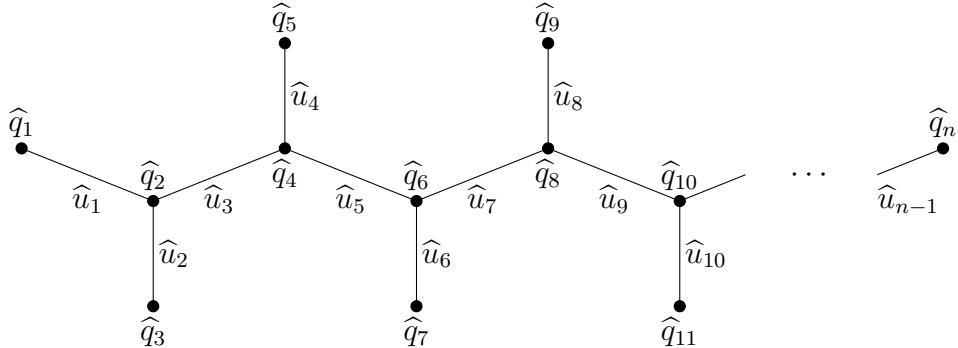

This configuration is not the one whose tangent cone admits exponentially many collisions.
%Furthermore it does not belong to the set of configuration positions admitted in the phase space.
%We are going to perturb it in an implicit way to obtain exponentially many collisions.
Indeed, all angles between adjacent segments $\widehat u_i$ are equal to $\frac\pi2$.
Hence, by \eqref{e:zero-angles} and \eqref{e:normal-angles}, the tangent cone
$\Cone(\widehat q)$ is a right-angled cone.
This implies that a billiard trajectory in $\Cone(\widehat q)$ cannot experience
more than $n-1$ collisions.
Our plan is to construct a configuration $q\in\pd \B_{3,n}$ near $\widehat q$ whose cone
does admit trajectories with exponentially many collisions and apply
Lemma \ref{l:from-cone-to-balls} to~$q$.
(Compare with Lemma \ref{l:almost-right-angles} and Theorem \ref{t:ndim}).

We define a specific set $\E\subset\N\times\N$ by
$$
 \E = \{ (i,j) \in\N\times\N : \text{either $j=i+1$ or $i$ is odd and $j=i+2$} \} .
$$
This set is illustrated in Figure \ref{f:graph} as a set of edges of a graph
with vertices in $\N$.

\begin{figure}[h]
\begin{tikzpicture}[scale=1.5]
\def\point#1{
\filldraw (#1-1,0) circle (1.3pt) ;
\draw (#1-1,0) node[below] {$#1$} ;
}
\point{1} \point{2} \point{3} \point{4}
\point{5} \point{6} \point{7}
\draw (0,0)--(6.5,0) ;
\draw (6,0)--(6.5,0.25) ;
\def\arc#1{
 \draw (#1-1,0) .. controls (#1,0.5) .. (#1+1,0) ;
}
\arc{1} \arc{3} \arc{5}
\draw (7,0) node {$\cdots$} ;
\end{tikzpicture}
\caption{The set $\E$.
For each $(i,j)\in \E$ the edge connecting $i$ and $j$ is depicted.
}
\label{f:graph}
\end{figure}

Let $m=n-1$.
Observe that for $1\le i<j\le m$, $(i,j)\in\E$ if and only if
the segments $\widehat u_i$ and $\widehat u_j$ meet at a common endpoint.
We denote by $\E_m$ the set of pairs $(i,j)\in\E$ such that $i,j\le m$.
We perturb our configuration by applying the following lemma.

\begin{lemma}\label{l:perturb-q}
There exists $\theta=\theta(m)>0$ such that the following holds.
For any collection of numbers $\{\alpha_{ij}\}$
indexed by pairs $(i,j)\in\E_m$
and such that $|
\alpha_{ij}-\frac\pi2|<\theta$
for all $(i,j)\in\E_m$
there exists a configuration $q\in\B_{3,m+1}$ of $m+1$ balls such that
\begin{enumerate}
\item The combinatorics of ball contacts in $q$ is the same as in $\widehat q$.
 That is, $|q_i-q_j|=1$ iff  $|\widehat q_i-\widehat q_j|=1$.
\item Let $u_1,\dots,u_m$ be the segments between the centers of pairs of touching balls of $q$
enumerated in the same way as we have enumerated $\{\widehat u_i\}$.
Then $\angle (u_i,u_j)=\alpha_{ij}$ for all $(i,j)\in\E_m$.
\end{enumerate}
\end{lemma}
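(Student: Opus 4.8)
The plan is to build the configuration $q$ directly, one vertex at a time, exploiting that the contact graph of $\widehat q$ is a \emph{tree}. Recall that the segments $\widehat u_i$ form a caterpillar: a spine $\widehat q_1,\widehat q_2,\widehat q_4,\widehat q_6,\dots$ joined by the odd-indexed segments $\widehat u_1,\widehat u_3,\widehat u_5,\dots$, with one leg $\widehat u_{2k}=[\widehat q_{2k},\widehat q_{2k+1}]$ hanging from each interior spine vertex. Thus every vertex is incident to at most three segments, and at an interior spine vertex $\widehat q_{2k}$ exactly the three segments $u_{2k-1},u_{2k},u_{2k+1}$ meet; $\E_m$ prescribes precisely the three pairwise angles among them. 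Because the graph is a tree there are no cycles to close, so I would fix $q_1$ at the origin and the direction of $u_1$ equal to that of $\widehat u_1$, then traverse the spine, at each interior vertex placing as unit segments the two not-yet-placed incident segments (the outgoing spine edge and the leg) so as to realize the prescribed angles. Since the new vertices are produced one at a time, this reduces the whole statement to a purely local angle-realization problem at a single vertex (boundary vertices carry fewer constraints, so they are only easier).

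The local problem is the following: given a fixed unit direction $d_1\in\R^3$ (the already-placed incident segment, viewed as emanating from the current vertex) and three cosines $c_{12},c_{13},c_{23}$, all close to $0$ since the angles are close to $\tfrac\pi2$, find unit vectors $d_2,d_3\in\R^3$ with $\langle d_1,d_2\rangle=c_{12}$, $\langle d_1,d_3\rangle=c_{13}$, and $\langle d_2,d_3\rangle=c_{23}$. This is solvable precisely because the prescribed symmetric matrix with unit diagonal and these off-diagonals is within $\theta$ of the identity, hence positive definite, and three vectors fit in $\R^3$: every positive-definite $3\times 3$ Gram matrix is realized by a triple of unit vectors, unique up to an isometry of $\R^3$, so I can apply a rotation to align the realizing triple's first vector with $d_1$. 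The leftover freedom (the stabilizer $O(2)$ of $d_1$, i.e.\ rotation about $d_1$ together with a reflection) is harmless for existence; I would pin it down by continuity, choosing at each vertex the branch of solutions that depends continuously on $\{\alpha_{ij}\}$ and coincides with the directions of $\widehat q$ when all $\alpha_{ij}=\tfrac\pi2$. Such a continuous selection exists near the all-$\tfrac\pi2$ point because there the solution variety is a smooth nondegenerate manifold.

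It remains to secure condition (1). By construction every adjacent pair satisfies $|q_i-q_j|=1$, so the ``if'' direction is automatic. For the ``only if'' direction I would argue by continuity: the entire traversal is a finite composition of continuous local solves, so $q=q(\{\alpha_{ij}\})$ depends continuously on the prescribed angles and equals $\widehat q$ when every $\alpha_{ij}=\tfrac\pi2$. The configuration $\widehat q$ satisfies the strict inequalities $|\widehat q_i-\widehat q_j|>1$ for every non-adjacent pair (only the designated pairs touch), and there are finitely many such pairs, so these strict inequalities persist under a small perturbation. Choosing $\theta=\theta(m)$ small enough that $q$ stays within the corresponding neighborhood of $\widehat q$ then guarantees $|q_i-q_j|>1$ for all non-adjacent pairs, whence $q\in\B_{3,m+1}$ and its contact combinatorics agree with those of $\widehat q$. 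Condition (2) holds by the construction of the directions.

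The step I expect to be the main obstacle is the local solve together with the continuous branch selection, and it is exactly here that the ambient dimension $3$ is essential: each vertex carries at most three incident segments, so three prescribed pairwise angles must be realized simultaneously, and a positive-definite $3\times 3$ Gram matrix can be realized in $\R^3$ but not, in general, in a lower-dimensional space. This matches the remark that the two-dimensional analogue fails for ``lack of flexibility in constructing configurations with prescribed angles.'' Everything else---the tree traversal and the perturbation argument preserving the contact combinatorics---is routine once the local realization is in hand.
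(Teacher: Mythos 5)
Your proof is correct, and its overall architecture---an inductive placement of the centers along the caterpillar contact tree, followed by a continuity argument showing that for small $\theta$ the strict inequalities $|q_i-q_j|>1$ for the non-touching pairs persist---is the same as the paper's. The difference lies in how the local step is organized. The paper decouples the two kinds of segments: it first lays out the entire spine $u_1,u_3,u_5,\dots$ in the $xy$-plane, realizing the spine angles $\alpha_{i-2,i}$ by a purely planar construction, and only then attaches each leg $u_i$ ($i$ even) as the unique out-of-plane unit direction making the two prescribed angles $\alpha_{i-1,i}$ and $\alpha_{i,i+1}$ with the two already-placed spine segments at that vertex; solvability there is the spherical triangle inequality for the triple $\alpha_{i-1,i}$, $\alpha_{i,i+1}$, $\alpha_{i-1,i+1}$, which holds explicitly once $\theta<\frac{\pi}{6}$, and the two mirror solutions are disambiguated by the half-space condition rather than by a branch selection. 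You instead solve, at each interior spine vertex, a simultaneous two-unknown problem via positive definiteness of the $3\times 3$ Gram matrix, and must then argue for a continuous selection on the $O(2)$-orbit of solutions; this is sound (Cholesky factorization, or an explicit parametrization of $d_2,d_3$ in an adapted frame anchored to the directions of $\widehat q$, supplies the smooth branch), but it is the one place where your write-up is terser than the paper's one-unknown-at-a-time scheme, which has uniqueness built in at every step. What each buys: the paper's version yields an explicit angle threshold and avoids any discussion of solution varieties (at the cost of a small parity trick---for even $m$ it runs the construction with a dummy $(m+2)$-nd ball at right angles and deletes it---which your traversal renders unnecessary, since a boundary vertex simply carries fewer constraints); your version is more uniform, would work verbatim for any contact tree of maximum degree $3$, and correctly isolates where the ambient dimension $3$ is used, in accordance with the paper's remark on the lack of such flexibility in dimension $2$.
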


\begin{proof}
This is an easy lemma.
For completeness, we provide a proof.
First consider the case when $m$ is odd.
Let $q_1=\widehat q_1$, $q_2=\widehat q_2$,
and $u_1=[q_1,q_2]$.
Then, for $i=3,5,7,\dots,m$,
let $q_{i+1}$ be the unique point in the $xy$-plane
such that $|q_{i-1}-q_{i+1}|=1$,
the segments $u_{i-2}$ and $u_i:=[q_{i-1},q_{i+1}]$
satisfy $\angle (u_{i-2},u_i)=\alpha_{i-2,i}$,
and they form a triangle oriented in the same 
way as the one formed by $\widehat u_{i-2}$ and $\widehat u_i$.

Finally, for $i=2,4,6,\dots,m-1$,
let $q_{i+1}$ be the unique point in $\R^3$ such that
$q_{i+1}$ lies in the same half-space as $\widehat q_{i+1}$ with respect to the $xy$-plane,
$|q_i-q_{i+1}|=1$,
and the segments $u_{i-1}$, $u_{i+1}$, and $u_i:=[q_i,q_{i+1}]$
satisfy $\angle(u_{i-1},u_i)=\alpha_{i-1,i}$
and $\angle(u_i,u_{i+1})=\alpha_{i,i+1}$.
This is possible whenever $\theta<\frac\pi6$,
since the three angles $\alpha_{i-1,i}$, $\alpha_{i,i+1}$, and $\alpha_{i-1,i+1}$
satisfy the triangle inequality and their sum is less than $2\pi$.

The resulting configuration $q\in\R^{3n}$ tends to $\widehat q$
as $\alpha_{ij}\to\frac\pi2$. Thus if $\theta$ is sufficiently small
then $|q_i-q_j|>1$ for all $i,j$
such that $|\widehat q_i-\widehat q_j|>1$.

In the case when $m$ is even, apply the above construction to $m+1$ in place of $m$,
assuming that $\alpha_{m,m+1}=\alpha_{m-1,m+1}=\frac\pi2$,
and then remove the point $q_{m+2}$.
\end{proof}

Let $q$ be a configuration constructed in Lemma \ref{l:perturb-q}
(for a sufficiently small $\theta$ and a collection of angles $\{\alpha_{ij}\}$
to be specified later).
Define $K=\Cone(q)$.
Each wall of $K$ corresponds to a pair of touching balls in~$q$.
We enumerate these walls in the same way as we have enumerated the segments $\{u_i\}$
and we denote by $\nu_1,\dots,\nu_m$ their respective normals.
By \eqref{e:normal-angles} and \eqref{e:zero-angles},
for $1\le i<j\le m$ we have
\be\label{e:alphas}
 \langle \nu_i,\nu_j\rangle =
 \begin{cases}
  \tfrac12 \cos\alpha_{ij} , & (i,j)\in\E \\
  0, &(i,j)\notin\E .
 \end{cases}
\ee
If $\ep$ is sufficiently small
then \eqref{e:alphas} and the assumption $|\alpha_{ij}-\frac\pi2|<\theta$
imply that the Gram matrix 
$(\langle \nu_i,\nu_j\rangle)$ is close to the identity one.
Therefore the vectors $\nu_1,\dots,\nu_m$ are linearly independent.
Hence $K$ is isometric to $K_0\times\R^{3n-m}$ where $K_0$
is the intersection of $K$ and the linear span of $\nu_1,\dots,\nu_m$.
The linear factor $\R^{3n-m}$ plays no role here and we construct a desired
billiard trajectory in $K_0$.

Note that $K_0$ is an $m$-dimensional polyhedral cone with the same
normals $\nu_1,\dots,\nu_m$ to faces.
Since the normals are linearly independent,
for every $m$-tuple $(\xi_1,\dots,\xi_m)\in \R_+^m$
there exists a unique point $x\in K_0$ such that
$\langle x,\nu_i\rangle=\xi_i$ for all~$i$.

Using this fact, we represent a billiard trajectory $\ga\co I\to K_0$, where $I$ is an interval,
by the collection of functions 
$f_i\co I\to\R_+$, $i=1,\dots,m$, given by
$f_i(t)=\langle \ga(t),\nu_i\rangle$.
In other words, $f_i(t)$ is the distance from $\ga(t)$
to the $i$th wall.
These functions are piecewise linear,
their break points (that is, discontinuity points of the derivative)
occur only at moments where one of them vanishes,
and the reflection rule \eqref{e:angle-of-reflection}
takes the following form:
If $i\in\{1,\dots,m\}$ and $t\in I$ are such that $f_i(t)=0$ then
\be\label{e:gram-rule}
 f_j'(t_+) = f_j'(t_-) - 2\langle \nu_i,\nu_j\rangle f_i'(t_-) , \qquad j=1,\dots,m .
\ee
Since $\gamma$ never hits intersections of walls,
at every moment $t\in I$
no more than one of the values $f_1(t),\dots,f_m(t)$ can vanish.

We consider a more general problem where the scalar products $\langle \nu_i,\nu_j\rangle$
in \eqref{e:gram-rule} are replaced by entries of an $m\times m$ matrix $A=(a_{ij})$
which is not assumed to be positive definite or even symmetric.

\begin{definition}
We say that an $m\times m$ matrix $A=(a_{ij})$ 
is {\em admissible} if $a_{ii}=1$ for all~$i$.
For an admissible matrix $A$, 
an {\em $A$-trajectory} is a piecewise linear function
$$
f=(f_1,\dots,f_m)\co I\to\R_+^m
$$
with finitely many break points,
where $I\subset\R$ is an interval, such that:
\begin{enumerate}
 \item[1.] No two of $f_i$'s vanish simultaneously. 
 That is, if $f_i(t)=f_j(t)=0$ for some $i$, $j$, and $t$, then $i=j$.
 \item[2.] $f$ is linear on any interval where all $f_i$'s are strictly positive. 
 \item[3.] If $i$ and $t$ are such that $f_i(t)=0$ then, for every $j\in\{1,\dots,m\}$,
\be\label{e:A-rule}
 f_j'(t_+) = f_j'(t_-) - 2a_{ij} f_i'(t_-) .
\ee
Such moments $t$ are referred to as {\em collisions}.
 \item[4.] Collisions do not occur at endpoints of $I$.
\end{enumerate}
\end{definition}

In particular, if $a_{ij}=\langle \nu_i,\nu_j\rangle$ for all $i,j$, then $A$-trajectories
correspond exactly to billiard trajectories in~$K_0$.
Due to the condition $a_{ii}=1$, the rule \eqref{e:A-rule} for $j=i$ 
takes the form $f_i'(t_+) = - f_i'(t_-)$.

%Therefore, if a trajectory $f\co [a,b]\to\R^m_+$
%and has a collision $f_i(t)=0$ at $t=b$ the local extension of $f$
%beyond $b$ defined by \eqref{e:A-rule} stays within $\R^m_+$.
%More generally, any $A$-trajectory defined on a segment $[a,b]$
%can be extended to an $A$-trajectory defined on $(a-\ep,b+\ep)$
%for some $\ep>0$.

We describe two ways of modifying an admissible matrix $A$
preserving the property that there is an $A$-trajectory
with many collisions.
The first one is a sufficiently small perturbation.

\begin{lemma}\label{l:perturbA}
Let $N\in\N$ and let $A$ be an admissible matrix 
such that there is an $A$-trajectory with $N$ collisions.
Then there exists $\de>0$ such that for every
admissible matrix $\widetilde A$ satisfying $\|\widetilde A-A\|<\de$
there is an $\widetilde A$-trajectory with $N$ collisions.
(Here and below the matrix norm $\|\cdot\|$ is the maximum of the absolute values
of the matrix entries).
\end{lemma}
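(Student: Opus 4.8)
The plan is to present the $A$-trajectory as the output of an explicit ``flow and reflect'' procedure and to observe that this procedure depends continuously (indeed smoothly) on the matrix, so that replacing $A$ by a nearby admissible $\widetilde A$ produces a nearby trajectory realizing the same sequence of collisions. Let $f=(f_1,\dots,f_m)$ be the given $A$-trajectory, with collisions at $t_1<\dots<t_N$ where $f_{i_k}(t_k)=0$, and shrink the interval so that $f$ is a straight segment lying in the open orthant $\{x\in\R^m : x_i>0\text{ for all }i\}$ both before $t_1$ and just after $t_N$. I would fix once and for all the initial data $p_0=f(t_0)$ and $v_0=f'(t_0)$ at some $t_0<t_1$ with $f(t_0)$ in the open orthant, and vary only the matrix.

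The two building blocks are the following. On a subinterval where all coordinates are positive, $f$ is affine, $f(t)=p+(t-s)v$; the next collision occurs at the wall $i$ minimizing the hitting time $\tau_i=-p_i/v_i$ among the indices with $v_i<0$, after which \eqref{e:A-rule} replaces the velocity $v$ by $R_i^A(v)$, where $(R_i^A(v))_j=v_j-2a_{ij}v_i$. Both the first-hit map $(p,v)\mapsto (p+\tau_i v,\,R_i^A(v))$ and its dependence on the entries of the matrix are smooth, provided the minimizing index $i$ is unique and $v_i<0$ strictly. So everything reduces to checking that these two nondegeneracy conditions hold along $f$ and are stable under perturbation.

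Uniqueness of the hit at each $t_k$ is immediate from condition~1 in the definition, which forbids two coordinates from vanishing at the same moment. For transversality, note that since $f$ has only finitely many collisions $t_1<\dots<t_N$, and these are by definition all the moments at which some coordinate vanishes, every coordinate is strictly positive on each open segment $(t_{k-1},t_k)$; as $f_{i_k}$ is then affine and positive there and vanishes at the right endpoint $t_k$, its left slope satisfies $f_{i_k}'(t_k^-)<0$. The same positivity shows that on every compact subinterval of $(t_{k-1},t_k)$ all coordinates are bounded below by a positive constant.

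Finally I would run the flow-and-reflect recipe for a perturbed admissible matrix $\widetilde A$, starting from the fixed data $(p_0,v_0)$ and using the walls $i_1,\dots,i_N$ in order with the reflection maps $R_{i_k}^{\widetilde A}$, and prove by induction on $k$ that for $\|\widetilde A-A\|$ small the resulting $\widetilde f$ is a genuine $\widetilde A$-trajectory with exactly $N$ collisions. At each step the collision time, collision point, and post-collision velocity of $\widetilde f$ depend continuously on $\widetilde A$ at $\widetilde A=A$ and hence stay close to those of $f$; because the hit at $i_k$ is the strict first hit with strictly negative normal velocity, the same wall is hit strictly first by $\widetilde f$, and because the remaining coordinates are bounded away from $0$ on the relevant compact segments, no collision is created or destroyed. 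Collecting the finitely many smallness requirements from the induction into a single $\delta>0$ completes the argument. I expect the only real work to be this last bookkeeping: keeping simultaneous control, for all $N$ segments at once, of (i) which wall is hit first, (ii) transversality of each hit, and (iii) the absence of spurious interior zeros; each is an open condition at $\widetilde A=A$, and the single obstacle is organizing them into one neighborhood rather than any analytic subtlety.
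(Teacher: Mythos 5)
Your proposal is correct and follows essentially the same route as the paper: both arguments fix intermediate times separating the $N$ collisions, observe that the one-collision evolution map (collision time, point, and reflected velocity) depends continuously on the matrix together with the data at the start of each segment, and compose the finitely many steps, with uniqueness of the vanishing coordinate and the strictly negative incoming slope supplying exactly the open nondegeneracy conditions the paper encodes via its explicit formulas for $\widetilde f_{i_1}$ and $\widetilde f_j$. The only cosmetic difference is that the paper writes the step map in closed form while you verify the same stability qualitatively; no gap either way.
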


\begin{proof}
This is yet another easy lemma.
Let $f\co (a,b)\to\R_+^n$ be an $A$-trajectory with $N$
collisions at moments $t_1<\dots<t_N$.
For $k=1,\dots,N$, let $i_k$ be the index such that $f_{i_k}(t_k)=0$.
Fix $\tau_0\in(a,t_1)$, $\tau_k\in(t_k,t_{k+1})$ for $k=1,\dots,N-1$,
and $\tau_N\in(t_N,b)$.

Clearly an $\widetilde A$-trajectory $\widetilde f$ is uniquely determined by the initial
data $(\widetilde f(\tau_0),\widetilde f'(\tau_0))$. 
For convenience we consider the matrix $\widetilde A$ as a part of the initial data.
Let $\widetilde A=(\widetilde a_{ij})$ be an admissible matrix,
$x=(x_1,\dots,x_m)\in\R_+^m$, and $v=(v_1,\dots,v_m)\in\R^m$.
If $\widetilde A$ is sufficiently close to $A$, $x$ to $f(\tau_0)$,
and $v$ to $f'(\tau_0)$, then
there exists an $\widetilde A$-trajectory 
$\widetilde f\co [\tau_0,\tau_1]\to\R_+^m$
with initial data $\widetilde f(\tau_0)=x$
and $\widetilde f'(\tau_0)=v$ and precisely one collision $f_{i_1}(\widetilde t_1)=0$
at some moment $\widetilde t_1\in(\tau_0,\tau_1)$.
Moreover the map $(\widetilde A, x, v)\mapsto (\widetilde A,f(\tau_1),f'(\tau_1))$
that sends the initial data to the terminal data
is continuous.
Indeed,  $\widetilde f$ is given by the
explicit formulae
$$
 \widetilde f_{i_1}(t) = |x_{i_1}+(t-\tau_0)v_{i_1}| 
$$
and 
$$
 \widetilde f_j(t) = x_j + (t-\tau_0) v_j - v_{i_1} \widetilde a_{i_1j} (t-\widetilde t_1 + |t-\widetilde t_1|) , \qquad j\ne i_1,
$$
where $\widetilde t_1=\tau_0-x_{i_1}/v_{i_1}$.

Applying the same argument to intervals $[\tau_{k-1},\tau_k]$, $k=1,\dots,N$,
and composing the resulting maps
one sees that, if $\widetilde A$ is sufficiently close to $A$ then
there is an $\widetilde A$-trajectory defined on $[\tau_1,\tau_N]$
with one collision on each of the intervals.
\end{proof}

The second modification of $A$ is a rescaling
described in the following lemma.

\begin{lemma}\label{l:rescaleA}
Let $A=(a_{ij})$ be an admissible matrix and 
$\la=(\la_1,\dots,\la_m)$ an $m$-tuple of positive numbers.
Define a matrix $A^\la = (a^\la_{ij})$ by
$$
  a^\la_{ij} = \frac{\la_j}{\la_i} a_{ij}, \qquad 1\le i,j\le m.
$$
Then, if $A$ admits an $A$-trajectory with $N$ collisions
then so does $A^\la$.
\end{lemma}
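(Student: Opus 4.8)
The plan is to produce an explicit $A^\la$-trajectory by rescaling each coordinate of a given $A$-trajectory. Let $f=(f_1,\dots,f_m)\co I\to\R_+^m$ be an $A$-trajectory with $N$ collisions, and define $g=(g_1,\dots,g_m)\co I\to\R_+^m$ by $g_i(t)=\la_i f_i(t)$. Since every $\la_i>0$, we have $g_i\ge 0$, and $g$ is piecewise linear with finitely many break points, these being exactly the break points of $f$. I would first record that $A^\la$ is itself admissible, because $a^\la_{ii}=\frac{\la_i}{\la_i}a_{ii}=1$. Conditions 1, 2, and 4 of the definition are then immediate: since $\la_i>0$, we have $g_i(t)=0$ if and only if $f_i(t)=0$, so the zero sets (and hence the candidate collision moments) of $g$ coincide with those of $f$. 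Consequently no two $g_i$ vanish simultaneously, $g$ is linear on every interval where all $g_i>0$, and no collision occurs at an endpoint of $I$.

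The only computation is the reflection rule, condition~3. Suppose $g_i(t)=0$, equivalently $f_i(t)=0$, so that $t$ is a collision moment of $f$. Applying \eqref{e:A-rule} for $A$ and multiplying through by $\la_j$ gives
\begin{align*}
 g_j'(t_+)
 &=\la_j f_j'(t_+)
 =\la_j\bigl(f_j'(t_-)-2a_{ij}f_i'(t_-)\bigr) \\
 &=g_j'(t_-)-2\,\frac{\la_j}{\la_i}\,a_{ij}\,g_i'(t_-),
\end{align*}
where in the last step I used $\la_j f_i'(t_-)=\frac{\la_j}{\la_i}\,g_i'(t_-)$. Since $a^\la_{ij}=\frac{\la_j}{\la_i}a_{ij}$, this is precisely the $A^\la$-rule \eqref{e:A-rule}. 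Hence $g$ is an $A^\la$-trajectory, and because its collisions occur at the very same moments as those of $f$, it has exactly $N$ collisions, which proves the lemma.

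There is essentially no obstacle here; the entire content lies in spotting the correct scaling $g_i=\la_i f_i$ (indeed any common multiple $g_i=K\la_i f_i$ with $K>0$ works equally well), a choice that is forced by demanding that the coefficient $a_{ij}$ in the reflection rule be converted into $\frac{\la_j}{\la_i}a_{ij}$. The single point deserving a moment's care is keeping track of which index carries $\la_i$ and which carries $\la_j$ in \eqref{e:A-rule}, so that the asymmetric ratio $\la_j/\la_i$ emerges with the correct orientation; everything else is inherited verbatim from $f$.
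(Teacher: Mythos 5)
Your proof is correct and follows exactly the paper's argument: both rescale the trajectory coordinatewise via $g_i(t)=\la_i f_i(t)$ and verify the reflection rule \eqref{e:A-rule} by multiplying through by $\la_j$, with the collisions of $g$ occurring at the same moments as those of $f$. Your additional checks of conditions 1, 2, and 4 and of the admissibility of $A^\la$ are implicit in the paper's shorter write-up, so there is no substantive difference.
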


\begin{proof}
Note that $a^\la_{ij}=a_{ij}=1$, hence $A^\la$ is an admissible matrix.
Let $f\co I\to\R^m_+$ be an $A$-trajectory with $N$ collisions.
Define $g\co I\to\R^m_+$ by $ g_i(t) = \la_i f_i(t) $
for $i=1,\dots,m$.
Multiplying \eqref{e:A-rule} by $\la_j$ yields
$$
 g_j'(t_+) = g_j'(t_-) - 2 \frac{\la_j}{\la_i} a_{ij} g_i'(t_-)
 = g_j'(t_-) - 2 a^\la_{ij} g_i'(t_-) .
$$
Thus $g$ is an $A^\la$-trajectory.
The collisions of $g$ are at the same moments as those of~$f$.
\end{proof}

With there operations at hand, we reduce our goal to
constructing an $A$-trajectory
with many collisions
for a concrete $m\times m$ matrix $A=A_m$
whose entires $(a_{ij})$ are given by
\be\label{e:defA}
 a_{ij} =
 \begin{cases}
   1  & \text{if $i=j$} , \\
   -1 & \text{if $(i,j)\in\E_m$} , \\
   0  & \text{otherwise}.
 \end{cases}
\ee
Recall that the set $\E_m$ is not symmetric,
it includes only pairs $(i,j)$ with $i<j$.
Thus the matrix $A_m$ defined by \eqref{e:defA}
is upper-triangular.
Note that $A_m$ is a sub-matrix of $A_{m+1}$
in the sense that for $i,j\le m$, the $(i,j)$-th entries
of $A_m$ and $A_{m+1}$ are the same.

\begin{lemma}\label{l:from-A-to-balls}
Let $A_m$ be the matrix defined by \eqref{e:defA}.
Suppose that there is
an $A_m$-trajectory with $N$ collisions for some $N\in\N$.
Then $\maxcol(m+1,3)\ge N$.
\end{lemma}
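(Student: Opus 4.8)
The plan is to realize the abstract combinatorial matrix $A_m$ as the (rescaled) Gram matrix of the normals of a tangent cone $\Cone(q)$ for a genuine ball configuration $q$, and then invoke the two transfer lemmas already proved. The key observation is that \eqref{e:defA} prescribes off-diagonal entries that are either $0$ or $-1$, whereas the Gram entries coming from \eqref{e:alphas} are $\tfrac12\cos\alpha_{ij}$ for $(i,j)\in\E$ and $0$ otherwise. These two matrices have the same \emph{zero pattern} (both vanish off $\E_m$ on the diagonal blocks), so the whole burden is to match the nonzero entries up to the rescaling freedom afforded by Lemma \ref{l:rescaleA} and the perturbation slack afforded by Lemma \ref{l:perturbA}.

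First I would fix a small $\theta>0$ (smaller than the $\theta(m)$ from Lemma \ref{l:perturb-q}) and choose the angles $\alpha_{ij}$, $(i,j)\in\E_m$, all slightly less than $\tfrac\pi2$ so that each $\cos\alpha_{ij}>0$ is small but nonzero; this produces via Lemma \ref{l:perturb-q} a configuration $q\in\B_{3,m+1}$ with the same contact combinatorics as $\widehat q$, and via \eqref{e:alphas} a Gram matrix $G=(\langle\nu_i,\nu_j\rangle)$ whose diagonal is $1$, whose entries on $\E_m$ equal $\tfrac12\cos\alpha_{ij}>0$, and which vanishes elsewhere. Next I would reconcile signs and magnitudes. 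The sign discrepancy (Gram entries positive, $A_m$ entries $-1$) is cosmetic: replacing each $\nu_i$ by $\pm\nu_i$ flips rows and columns of $G$, and since the edge graph $\E_m$ (Figure \ref{f:graph}) is a tree-like graph with no cycles forcing a sign obstruction, one can choose a sign vector making all the $\E_m$-entries negative; equivalently one observes that the $A$-rule \eqref{e:A-rule} and the billiard rule \eqref{e:gram-rule} are invariant under simultaneously negating some coordinates $f_i$. Then Lemma \ref{l:rescaleA} handles magnitudes: the rescaled Gram matrix $G^\la$ has off-diagonal entries $\tfrac{\la_j}{\la_i}\cdot(\pm\tfrac12\cos\alpha_{ij})$, and choosing the $\la_i$ appropriately along the edges of the tree one can normalize each nonzero entry to be exactly $-1$. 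This is where the acyclicity of $\E_m$ matters: on a forest the system of ratio constraints $\tfrac{\la_j}{\la_i}\cdot\tfrac12\cos\alpha_{ij}=1$ is always solvable by propagating $\la$ outward from roots.

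With these two adjustments, the rescaled, sign-adjusted Gram matrix $G'$ is admissible and its nonzero off-diagonal entries sit exactly on $\E_m$, but they need not equal $-1$ \emph{on the nose} if I insist on fixed angles; so instead I would argue in the clean direction. Given the hypothesized $A_m$-trajectory with $N$ collisions, Lemma \ref{l:rescaleA} yields an $A_m^\la$-trajectory with $N$ collisions for any positive $\la$, and Lemma \ref{l:perturbA} yields a $\de$-neighborhood of $A_m$ all of whose admissible members still admit $N$-collision trajectories. It therefore suffices to exhibit \emph{some} genuine cone whose (sign-adjusted, rescaled) Gram matrix lies within $\de$ of $A_m$. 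Choosing the $\alpha_{ij}$ close enough to $\tfrac\pi2$ and then applying the tree-rescaling brings the realized matrix arbitrarily close to $A_m$, so for $\theta$ small it lands in the $\de$-ball. The resulting trajectory is a genuine billiard trajectory in $K_0$, hence in $\Cone(q)$, with $N$ collisions, and Lemma \ref{l:from-cone-to-balls} applied to this $q\in\B_{3,m+1}$ gives $\maxcol(m+1,3)\ge N$.

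The main obstacle I anticipate is the rescaling/sign bookkeeping being consistent across the graph $\E_m$: one must verify that the ratio constraints $\la_j/\la_i$ and the sign flips $\nu_i\mapsto\pm\nu_i$ can be satisfied \emph{simultaneously} and globally. Because $\E_m$ (from Figure \ref{f:graph}) contains no cycle that would impose a contradictory product constraint on the $\la_i$ or on the signs, this reduces to solving a triangular/forest system, which is routine; but it is the one place where the specific structure of $\E_m$ is genuinely used, and it is worth stating explicitly that the graph's acyclicity is what makes the normalization to $A_m$ possible. Everything else is an application of Lemmas \ref{l:perturb-q}, \ref{l:perturbA}, \ref{l:rescaleA}, and \ref{l:from-cone-to-balls} in sequence.
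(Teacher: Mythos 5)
Your overall skeleton --- realize a matrix within the $\de$-slack of Lemma \ref{l:perturbA} as the rescaled Gram matrix of $\Cone(q)$ for a configuration supplied by Lemma \ref{l:perturb-q}, transfer the trajectory with Lemma \ref{l:rescaleA}, and finish with Lemma \ref{l:from-cone-to-balls} --- is the paper's argument read in the reverse direction. But the step you yourself single out as the crux rests on a false premise: $\E_m$ is \emph{not} acyclic. By definition $\E$ contains $(i,i+1)$ for all $i$ and $(i,i+2)$ for odd $i$, so $(1,2),(2,3),(1,3)$ already form a triangle, and the graph in Figure \ref{f:graph} is a chain of such triangles. This breaks both of your normalization mechanisms as stated. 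The sign flips: to make all three Gram entries of a triangle negative you would need signs $\sigma_1,\sigma_2,\sigma_3\in\{\pm1\}$ with $\sigma_i\sigma_j=-1$ on each edge, which is impossible since the product of the three constraints gives $(\sigma_1\sigma_2\sigma_3)^2=-1$; moreover replacing $\nu_i$ by $-\nu_i$ changes the cone, and negating a coordinate $f_i$ violates $f\co I\to\R_+^m$, so the claimed invariance is not actually available. The ratio propagation: with the angles fixed first, the system $\frac{\la_j}{\la_i}\cdot\frac12\cos\alpha_{ij}=-1$ is overdetermined on a triangle --- the edges $(i,i+1)$ and $(i+1,i+2)$ force the ratio $\la_{i+2}/\la_i$, and the edge $(i,i+2)$ then demands the compatibility relation $\cos\alpha_{i,i+1}\cos\alpha_{i+1,i+2}=-2\cos\alpha_{i,i+2}$, which generic angles violate; the same obstruction survives in the approximate fallback you propose, since ``close to $-1$ on all three edges'' still imposes an approximate version of the same relation.

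The repair, which is exactly the paper's move, is to reverse the order of quantifiers: do not fix the angles and solve for $\la$, but choose $\la$ first (decaying fast, with $\la_j^2/\la_i^2<\de$ for $i<j$ and $\la_j/\la_i<\frac12\sin\theta$) and then \emph{define} the angles by $\cos\alpha_{ij}:=-2\la_j/\la_i$ for $(i,j)\in\E_m$. Lemma \ref{l:perturb-q} imposes no relations among the $\alpha_{ij}$ --- they are free independent parameters near $\frac\pi2$ --- so the cycles in $\E_m$ cost nothing, and taking the angles slightly obtuse makes the Gram entries negative with no sign gymnastics at all. The Gram matrix of the resulting normals is then exactly the rescaling $\widetilde A^\la$ of a matrix $\widetilde A$ that agrees with $A_m$ on and above the diagonal and has entries of absolute value less than $\de$ below it. Note that this below-diagonal discrepancy is unavoidable: the Gram matrix is symmetric while $A_m$ is upper-triangular, so your claim that the two matrices ``have the same zero pattern'' is also off, and absorbing those forced nonzero entries is precisely what the $\de$-slack of Lemma \ref{l:perturbA} is for. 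With this quantifier reversal (and an explicit application of Lemma \ref{l:rescaleA} with $\la^{-1}$ to convert the perturbed trajectory into one for the actual Gram matrix, a step you leave implicit), your argument closes and coincides with the paper's; without it, the triangle obstruction is fatal.
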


\begin{proof}
We choose a finite sequence $\la=(\la_1,\dots,\la_m)$
of positive numbers that decay sufficiently fast.
The precise requirements on $\la$ are specified later.

First we require that $\la_j^2/\la_i^2 < \de$ for all $i<j$
where $\de$ is the number provided by Lemma \ref{l:perturbA}
for $A_m$ and~$N$.
Define an $m\times m$ matrix $\widetilde A=(\widetilde a_{ij})$ by
\be\label{e:atilde}
 \widetilde a_{ij} =
 \begin{cases}
   1 & \text{if $i=j$} , \\
   -1 & \text{if $(i,j)\in\E_m$} , \\
   -\la_i^2/\la_j^2  &\text{if $(j,i)\in\E_m$}, \\
   0 & \text{otherwise}.
 \end{cases}
\ee
In the third case in \eqref{e:atilde} we have $i>j$
and therefore $|\widetilde a_{ij}|<\de$.
Since the other entries of $\widetilde A$ are the same as those of $A_m$,
we have $|\widetilde a_{ij}-a_{ij}|<\de$ for all $i,j$.
Hence, by Lemma \ref{l:perturbA}, there exists an $\widetilde A$-trajectory
with at least $N$ collisions.

Now rescale $\widetilde A$ using $\la$ as in Lemma \ref{l:rescaleA}.
Denote the resulting matrix $\widetilde A^\la$ by~$B$.
The entries $(b_{ij})$ of $B$ are given by $b_{ii}=1$,
$b_{ij}=-\la_j/\la_i$ if $(i,j)\in\E_m$,
$b_{ij}=-\la_i/\la_j$ if $(j,i)\in\E_m$,
and 0 otherwise.
Hence $B$ is symmetric.

Now we require that $\la_j/\la_i<\frac12\sin\theta$
where $\theta$ is the number provided by Lemma \ref{l:perturb-q}.
For each pair $(i,j)\in\E_m$ define $\alpha_{ij}\in(\frac\pi2-\theta,\frac\pi2+\theta)$
by 
$$
 \cos \alpha_{ij} = -2\la_j/\la_i = 2 b_{ij} .
$$
Let $q\in \B_{3,m+1}$ be the configuration of balls constructed in Lemma \ref{l:perturb-q}
for this collection of angles $\{\alpha_{ij}\}$.
Let $K=\Cone(q)$ and let $\nu_1,\dots,\nu_m$ be
the normals to faces of $K$ as explained above.
Then, by \eqref{e:alphas} and the definition of $B$,
we have
$\langle\nu_i,\nu_j\rangle=b_{ij}$ for all $1\le i,j\le m$.

Therefore, as explained above, every $B$-trajectory corresponds
to a billiard trajectory in $K_0$ (and hence in $K$)
with the same number of collisions.
Thus $K$ has a billiard trajectory with at least $N$ collisions.
Finally, we apply Lemma \ref{l:from-cone-to-balls} and conclude
that $\maxcol(m+1,3)\ge N$.
\end{proof}

The rest of the paper is devoted to constructing an $A$-trajectory
with exponentially many collisions for 
the matrix $A_m$ defined by \eqref{e:defA}.
Our plan is to first construct a generalized $A_m$-trajectory
where simultaneous collision of certain type are allowed
(see Definition \ref{d:gen-A-tr}),
and then perturb the generalized $A_m$-trajectory to a obtain
a genuine one (see Lemma \ref{l:generalized-trajectory}).

\begin{definition}\label{d:gen-A-tr}
Let $A$ be an admissible $m\times m$ matrix.
A {\em generalized $A$-trajectory} is a piecewise linear
map 
$$
 f=(f_1,\dots,f_m)\co I\to \R_+^m ,
$$
where $I\subset\R$ is an interval,
such that the following holds.
\begin{enumerate}
 \item[1.]
 If $f_i(t)=f_j(t)=0$ for some $i\ne j$ and $t\in I$, then $a_{ij}=a_{ji}=0$.
% \item $f$ is linear on any interval where all $f_i$'s are strictly positive. 
 \item[2.]
For every $t\in I$ and every $j\in\{1,\dots,m\}$,
\be\label{e:A-rule-gen}
 f_j'(t_+) = f_j'(t_-) - 2\sum_{i: f_i(t)=0} a_{ij} f_i'(t_-)
\ee
where we sum over the set of all indices $i\in\{1,\dots,m\}$ such that
$f_i(t)=0$ for the given~$t$.
In particular, $f$ is linear on any interval where all $f_i$'s are positive.
 \item[3.]
If $t$ is an endpoint of $I$ then $f_i(t)>0$ for all $i$.
\end{enumerate}
By the {\em number of collisions} of a generalized $A$-trajectory $f$
we mean the total number of roots of $f_i$'s.
That is, a moment $t$ when exactly $k$ of the values $f_i(t)$
have vanished contributes $k$ to the total number of collisions.
\end{definition}

\begin{lemma}\label{l:generalized-trajectory}
Let $A$ be an admissible $m\times m$ matrix
such that there exists a generalized $A$-trajectory with $N$ collisions
(see Definition \ref{d:gen-A-tr}).
Then there exists an $A$-trajectory with $N$ collisions.
\end{lemma}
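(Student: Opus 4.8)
The plan is to remove simultaneous collisions one at a time, exploiting the orthogonality condition in part~1 of Definition~\ref{d:gen-A-tr}. Let $f\co[a,b]\to\R_+^m$ be the given generalized $A$-trajectory with $N$ collisions. If no moment has two of the $f_i$ vanishing together, then $f$ already satisfies all axioms of an ordinary $A$-trajectory and there is nothing to prove. Otherwise I would induct on the number of \emph{multiple} moments, i.e.\ moments $t$ at which $S(t):=\{i:f_i(t)=0\}$ has at least two elements. Note first that every collision is automatically transversal: since the total number of roots is finite, each zero of each $f_i$ is isolated, and for a nonnegative piecewise-linear function an isolated zero at $t_0$ forces $f_i'(t_{0-})<0$, whence $f_i'(t_{0+})=-f_i'(t_{0-})>0$ by \eqref{e:A-rule-gen} with $a_{ii}=1$.

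First I would isolate one multiple collision. Fix a multiple moment $t_0$, put $S:=S(t_0)$ with $|S|=k\ge2$, and choose $\eta>0$ so small that on $[t_0-\eta,t_0+\eta]$ the only collision of $f$ is the one at $t_0$ and every $f_\ell$ with $\ell\notin S$ stays bounded away from $0$. Part~1 of Definition~\ref{d:gen-A-tr} gives $a_{ij}=a_{ji}=0$ for all distinct $i,j\in S$, which says precisely that the reflections \eqref{e:A-rule} attached to the walls of $S$ pairwise commute: reflecting in a wall $i\in S$ leaves $f_j'$ unchanged for every other $j\in S$ and alters $f_\ell'$ for $\ell\notin S$ by the additive amount $-2a_{i\ell}f_i'(t_-)$. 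Hence performing these $k$ single reflections one after another, in any order and at any $k$ distinct nearby times, yields the very same outgoing velocity as the single generalized reflection \eqref{e:A-rule-gen} at~$t_0$.

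This suggests the key step: split the multiplicity-$k$ collision into $k$ transversal single collisions. I would perturb the incoming data at time $t_0-\eta$ by an arbitrarily small vector so that the ratios $f_i(t_0-\eta)/|f_i'(t_0-\eta)|$, $i\in S$, become pairwise distinct; then the $k$ walls of $S$ are met at $k$ distinct moments of $(t_0-\eta,t_0+\eta)$, each transversally, while all $f_\ell$ with $\ell\notin S$ stay positive across the short window. By the commutation remark the velocity and position emerging at $t_0+\eta$ differ from $f'(t_0^+)$ and $f(t_0+\eta)$ by amounts that tend to $0$ with the perturbation. On the window we have thus traded one collision of multiplicity $k$ for $k$ simple ones, preserving the local count.

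The remaining, and main, difficulty is to reconnect the modified window to the rest of $f$ without losing or creating collisions downstream. Here I would re-solve the trajectory on $[t_0+\eta,b]$ from the slightly perturbed terminal data of the window, invoking the continuity of the piecewise-linear flow in its initial data established in the proof of Lemma~\ref{l:perturbA}: between consecutive collisions the trajectory is affine, and since each surviving collision is transversal, each of the finitely many later collision moments depends continuously on the data and none is destroyed by a sufficiently small perturbation (in particular no two are forced to merge). Choosing the initial perturbation small enough therefore produces a generalized $A$-trajectory that coincides with $f$ outside $[t_0-\eta,b]$, has one fewer multiple moment, and still has exactly $N$ collisions. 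Iterating over the finitely many multiple moments eliminates them all and leaves a genuine $A$-trajectory with $N$ collisions. The one point requiring real care is exactly this downstream control, where transversality (keeping each collision isolated and robust) together with the finiteness of the break points does the work.
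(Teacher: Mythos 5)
Your core mechanism is the same as the paper's: the orthogonality condition $a_{ij}=a_{ji}=0$ among the indices vanishing at a simultaneous collision makes the corresponding reflections commute, so a generic small perturbation splits a multiplicity-$k$ moment into $k$ transversal simple collisions, and continuity in the initial data preserves the downstream collision count. The paper implements this in one shot: it writes the explicit local solution \eqref{e:genA1}--\eqref{e:genA3} across each (possibly multiple) collision window, notes that each window's initial-to-terminal data map is a diffeomorphism, composes these maps over all windows, and invokes genericity of the initial data to make all the roots $\widetilde t_{k,i}$ distinct simultaneously. Your induction that removes one multiple moment at a time is a local variant of the same argument, and your bookkeeping is sound: small perturbations keep distinct collision moments distinct, indices that collided at a common moment have $a_{ij}=0$ so any residual simultaneity remains legal, and the number of multiple moments strictly drops.

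However, one step fails as literally written: the claim that the perturbed trajectory ``coincides with $f$ outside $[t_0-\eta,b]$''. A generalized $A$-trajectory is determined by its data $(x,v)$ at any single moment, in both time directions, and part 2 of Definition \ref{d:gen-A-tr} forbids a break point at a moment where all $f_i$ are positive; so you cannot keep $f$ unchanged on $[a,t_0-\eta]$ and alter $(f(t_0-\eta),f'(t_0-\eta))$ --- the spliced object has an illegal kink at $t_0-\eta$ and is not a generalized $A$-trajectory at all. The repair is easy and stays inside your toolkit: take $t_0$ to be the \emph{first} multiple moment, so that all earlier collisions are simple; then the data-transport map from $\tau_0$ to $t_0-\eta$ is a local diffeomorphism (this is precisely what the proof of Lemma \ref{l:perturbA} establishes), so your generic perturbation at $t_0-\eta$ can be realized by perturbing the data at the initial moment $\tau_0$, with the earlier simple collisions surviving by the same continuity; equivalently, extend the perturbed window backwards in time. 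A second, smaller point: for the downstream control you cite only Lemma \ref{l:perturbA}, whose proof treats simple collisions; to push continuity through later \emph{multiple} moments you must invoke your commutation remark there as well, which is exactly what the explicit formulas \eqref{e:genA1}--\eqref{e:genA3} in the paper's proof provide. With these two adjustments your induction closes and yields the same conclusion as the paper.
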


\begin{proof}
The argument is similar to that in the proof of Lemma \ref{l:perturbA}.
Let $f\co(a,b)\to\R_+^m$ be a generalized $A$-trajectory
and $t_1<\dots<t_M$ the moments of collisions.
For $k=1,\dots,M$ denote by $n_k$ the number of collisions at the moment $t_k$.
Then the total number of collisions $N$ equals $\sum n_k$.
Fix $\tau_0\in(a,t_0)$, $\tau_M\in(t_M,b)$,
and $\tau_k\in(t_k,t_{k+1})$ for $k=1,\dots,M-1$.

Just like $A$-trajectories, generalized $A$-trajectories are determined by their initial data.
We claim that for every $k\in\{1,\dots,M\}$ and any $(x,v)$ sufficiently
close to $(f(\tau_{k-1}),f'(\tau_{k-1}))$ there exists a generalized $A$-trajectory
$\widetilde f\co [\tau_{k-1},\tau_k]\to\R_+^m$
with initial data $\widetilde f(\tau_{k-1})=x$, $\widetilde f'(\tau_{k-1})=v$ and
precisely $n_k$ collisions.
Moreover the terminal data $(\widetilde f(\tau_k),\widetilde f'(\tau_k))$
depend smoothly on $(x,v)$.

To prove the claim, fix $k$ and 
define $J_k=\{i: f_i(t_k)=0 \}$.
Note that $|J_k|=n_k$.
For $(x,v)$ sufficiently close to $(f(\tau_{k-1}),f'(\tau_{k-1}))$,
define $\widetilde f\co [\tau_{k-1},\tau_k]\to\R_+^m$ by
\be\label{e:genA1}
  \widetilde f_i(t) = | x_i + (t-\tau_{k-1})v_i |, \qquad i\in J_k,
\ee
and
\be\label{e:genA2}
 \widetilde f_j(t) = x_j + (t-\tau_{k-1})v_j - \sum_{i=1}^{n_k} v_i (t-\widetilde t_{k,i} + |t-\widetilde t_{k,i}|), \qquad j\notin J_k,
\ee
where 
\be\label{e:genA3}
\widetilde t_{k,i}=\tau_{k-1}-x_i/v_i, \qquad i\in J_k,
\ee
are the roots of $f_i$'s.
Note that the roots $\widetilde t_{k,i}$ and
the terminal data $(\widetilde f(\tau_k),\widetilde f'(\tau_k))$
defined by the above formulae depend smoothly on $(x,v)$.
In particular $\widetilde t_{k,i} \in (\tau_{k-1},\tau_k)$ if the initial data $(x,v)$
is sufficiently close to $(\widetilde f(\tau_k),\widetilde f'(\tau_k))$.

The definition of a generalized $A$-trajectory implies that $a_{ij}=0$ for $i,j\in J_k$.
This ensures that $\widetilde f$ satisfies \eqref{e:A-rule-gen}
as long as $\widetilde f_j(t)>0$ for all $j\notin J_k$ and $t\in [\tau_{k-1},\tau_k]$.
The latter is true for $(x,v)=(f(\tau_{k-1}),f'(\tau_{k-1}))$
since in this case $\widetilde f=f$,
hence it is true for all $(x,v)$ sufficiently close to $(f(\tau_{k-1}),f'(\tau_{k-1}))$.
This finishes the proof of the claim.
Also observe that the roots $\widetilde t_{k,i}$
defined by \eqref{e:genA3} are distinct for almost all pairs $(x,v)$.

Similarly one shows that the initial data $(\widetilde f(\tau_{k-1}),\widetilde f'(\tau_{k-1}))$
depend smoothly on the terminal data  $(\widetilde f(\tau_k),\widetilde f'(\tau_k))$.
Thus the map that sends the initial data to the terminal data is a diffeomorphism
from a neighborhood of $(f(\tau_{k-1}),f'(\tau_{k-1}))$
to a neighborhood of $(f(\tau_k),f'(\tau_k))$.
Composing such diffeomorphisms for all $k$ we obtain that any initial data
$(x,v)$ sufficiently close to $(f(\tau_0),f'(\tau_0))$ determine
a generalized $A$-trajectory defined on $[\tau_0,\tau_M]$ with $N$ collisions.
Then by the routine of smooth topology one sees that for almost all initial
data the roots $\widetilde t_{k,i}$ are distinct for all $k$ and~$i$.

Thus a suitable perturbation of the initial data $(f(\tau_0),f'(\tau_0))$
gives us a generalized $A$-trajectory with $N$ collisions occurring at $N$ distinct moments.
Such a generalized $A$-trajectory is a genuine $A$-trajectory.
\end{proof}

It remains to construct a generalized $A_m$-trajectory,
for $A_m$ given by \eqref{e:defA},
with exponentially many collisions.
This is achieved by the following lemma.

\begin{lemma}\label{l:inductive-construction}
For $A$ defined by \eqref{e:defA},
there exists a generalized $A$-trajectory
$f\co\R\to\R_+^m$ satisfying the following conditions.
\begin{enumerate}
\item
 $|f_i'(t)|=1$ for all $i\in\{1,\dots,m\}$
 and all $t\in\R$ except the break points of $f_i$.
\item
 Denote by $T_i$ the set of all $t\in\R$ such that $f_i(t)=0$.
 Then $T_i$ is a finite arithmetic progression for every~$i$.
\item
 For all even $i=2k\le m$ one has
 %If $i\in\{1,\dots,m\}$ is even and $i=2k$, then 
 $|T_i|=2^k+2^{k-1}-1$.
\item
 For all odd $i=2k+1\le m$ one has
% If $i\in\{1,\dots,m\}$ is odd and $i=2k+1$, then 
 $|T_i|=2^{k+1}+2^k-2$.
\end{enumerate}
\end{lemma}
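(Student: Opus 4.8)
The plan is to read the reflection rule \eqref{e:A-rule-gen} as a dynamical system and to build $f=(f_1,\dots,f_m)$ one coordinate at a time. Since the matrix $A$ of \eqref{e:defA} is upper triangular, a root of $f_i$ changes $f_j'$ only for $j=i$ and for those $j>i$ with $(i,j)\in\E_m$; equivalently, $f_j$ is driven only by its own roots and by the roots of its \emph{parents} $f_{j-1}$ and (when $j$ is odd) $f_{j-2}$, and is untouched by every $f_k$ with $k>j$. Hence $f_1,\dots,f_m$ may be defined recursively. Reading \eqref{e:A-rule-gen} with $a_{ij}=-1$: at a root of a parent $f_i$ (where $f_i'(t_-)=-1$) the slope $f_j'$ drops by $2$, whereas a root of $f_j$ itself reverses $f_j'$. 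To keep all slopes in $\{\pm1\}$ (condition~1 of the Lemma), each parental \emph{trigger} must reach $f_j$ while it ascends (slope $+1$) and flip it to $-1$; the whole construction is arranged to guarantee this.

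For each $j$ let the \emph{trigger stream} $S_j$ be the set of roots of the parents of $f_j$, so $S_{2k}=T_{2k-1}$ and $S_{2k+1}=T_{2k-1}\cup T_{2k}$. I will make each $f_j$ a unit-slope sawtooth of fixed amplitude $h_j=d_j/2$, where $d_j=2^{-\lfloor (j-1)/2\rfloor}$: coming down from $+\infty$, $f_j$ makes one \emph{free} bounce just before its first trigger, after which every trigger of $S_j$ turns $f_j$ downward from height exactly $h_j$, so it reaches $0$ a time $h_j$ later and climbs back to height $h_j$ precisely at the next trigger. For this steady state to persist the triggers must be equally spaced with gap $2h_j=d_j$, and the free bounce is placed at $(\min S_j)-h_j$, so that $T_j=\{(\min S_j)-h_j\}\cup\{\,s+h_j\co s\in S_j\,\}$ is again an arithmetic progression, of difference $d_j$ and one term longer than $S_j$; in particular $|T_j|=|S_j|+1$.

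The crux is that $S_{2k+1}=T_{2k-1}\cup T_{2k}$ must itself be a single arithmetic progression, for otherwise the odd coordinate $2k+1$ could not run the steady state above. This is the source of the halving of $d_j$: the construction of the even coordinate yields $d_{2k}=d_{2k-1}$ and, thanks to the extra free bounce, places $T_{2k}$ exactly half a step $h_{2k}=d_{2k}/2$ off from $T_{2k-1}$, so that $T_{2k-1}$ and $T_{2k}$ interleave into one progression of difference $d_{2k}/2$, the longer set $T_{2k}$ (with $|T_{2k}|=|T_{2k-1}|+1$) bracketing $T_{2k-1}$ at both ends. Hence $d_{2k+1}=\tfrac12 d_{2k}$, and the sizes satisfy $|T_{2k}|=|T_{2k-1}|+1$ and $|T_{2k+1}|=|T_{2k-1}|+|T_{2k}|+1$. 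Setting $a_k=|T_{2k-1}|$ and $b_k=|T_{2k}|$ gives $b_k=a_k+1$ and $a_{k+1}=2a_k+2$ with $a_1=|T_1|=1$, whence $a_k=2^k+2^{k-1}-2$ and $b_k=2^k+2^{k-1}-1$; these give conditions~3 and~4, and condition~2 is the arithmetic-progression property just established.

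It remains to check Definition \ref{d:gen-A-tr}. Every $f_j$ is nonnegative, vanishing only at its roots, and has finitely many breakpoints, so $f\co\R\to\R_+^m$; since $\R$ has no endpoints, condition~3 of Definition \ref{d:gen-A-tr} is vacuous, and the unit slopes give condition~1 of the Lemma. Each root set $T_j$ sits at offset $h_j$ from $S_j$, and $S_j$ contains the roots of both parents of $j$; hence $T_j$ is disjoint from the root set of every adjacent coordinate, so two coordinates can share a root only when they are non-adjacent, i.e.\ $a_{ij}=a_{ji}=0$ — this is condition~1 of Definition \ref{d:gen-A-tr}. The same disjointness makes \eqref{e:A-rule-gen} hold pointwise: at a root of $f_j$ no parent collides, so $f_j$ merely bounces, while at a trigger exactly one parent collides (the two parents of an odd coordinate are themselves adjacent, hence never simultaneous) and $f_j$ is ascending by design, so its slope drops from $+1$ to $-1$. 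Thus $f$ is a generalized $A$-trajectory and Lemma \ref{l:generalized-trajectory} finishes the argument. I expect the main obstacle to be precisely the interleaving of the third paragraph — verifying that the two independent parental progressions of each odd coordinate fuse into one, consistently with the amplitude law $h_j=d_j/2$ and the front free bounce across all generations; once this alignment is secured, positivity, the $\pm1$ slopes, disjointness, and the counts are routine bookkeeping.
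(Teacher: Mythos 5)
Your proposal is correct and is essentially the paper's own proof: your sawtooth $f_j=\dist(\cdot,T_j)$ with $T_j=\{\min S_j-h_j\}\cup(S_j+h_j)$, $h_j=d_j/2$, reproduces exactly the paper's inductive placement $y_s=x_1+(s-\tfrac32)\beta$ and $z_s=y_1+(s-\tfrac32)\tfrac\beta2$, including the key half-step interleaving that fuses $T_{2k-1}\cup T_{2k}$ into a single arithmetic progression (which you correctly flag, and in fact already prove, as the crux), and your recurrence $|T_{2k}|=|T_{2k-1}|+1$, $|T_{2k+1}|=|T_{2k-1}|+|T_{2k}|+1$ with $a_{k+1}=2a_k+2$, $a_1=1$ gives the same counts. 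The only cosmetic differences are that you add one coordinate per step where the paper adds the pair $(f_{2k},f_{2k+1})$ per induction step, and your closing appeal to Lemma \ref{l:generalized-trajectory} is unnecessary for this lemma (it is the subsequent step in the paper's chain, not part of this statement).
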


\begin{proof}
We argue by induction in $m$.
For the induction base $m=1$ we set $f_1(t)=|t|$.
Then $T_1=\{0\}$ and $|T_1|=1$.
We regard $T_1$ as an arithmetic progression with
common difference~1.

For the induction step, we assume that
$(f_1,\dots,f_{2k-1})$ is a generalized $A_m$-trajectory
satisfying (1)--(4) for $m=2k-1$
and prove the assertion for $m=2k$ and $m=2k+1$.
We do not change the existing $f_i$'s for $i\le 2k-1$ and just add
new functions $f_{2k}$ and $f_{2k+1}$.
%Observe that $A_m$ is a sub-matrix of $A_{m+1}$
%in the sense that for $i,j\le m$, the $(i,j)$-th entries
%of $A_m$ and $A_{m+1}$ are the same.

%Since $A_{2k+1}$ is upper-triangular and contains $A_{2k}$ and $A_{2k-1}$
%as sub-matrices,
%adding an arbitrary $f_{2k}$ and $f_{2k+1}$
%does not break the rule \eqref{e:A-rule-gen}
%for $j\le 2k-1$.

By the induction hypothesis, the set $T_{2k-1}$ is a finite
arithmetic progression.
We denote its elements by $x_1<x_2<\dots<x_M$,
where $M=2^k+2^{k-1}-2$, and its common difference is denoted by~$\beta$.
We first define the set $T_{2k}$ by $T_{2k}=\{y_1,\dots,y_{M+1}\}$ where 
$$
  y_s = x_1 +(s-\tfrac32) \beta , \qquad s=1,\dots,M+1 .
$$
Note that
$$
 y_1 < x_1 < y_2 < x_2 <\dots < y_M < x_M < y_{M+1} ,
$$
$T_{2k}$ is an arithmetic progression with common difference $\beta$,
and the union $T_{2k-1}\cup T_{2k}$ is an arithmetic progression
with common difference $\beta/2$.
Now define
$$
 f_{2k}(t) = \dist(t, T_{2k} ) = \min \{ |t-y_s| : 1\le s\le M+1 \}
$$
for all $t\in\R$.
The requirements (1) and (2) for $f_{2k}$ follow from the construction.
For (3), observe that $|T_{2k}|=M+1=2^k+2^{k-1}-1$.
It remains to verify that $(f_1,\dots,f_{2k})$ is a generalized $A_{2k}$-trajectory.
Since $A_{2k}$ is upper-triangular and contains $A_{2k-1}$
as a sub-matrix, the requirements of the definition of the generalized $A$-trajectory
for the components $f_1,\dots,f_{2k-1}$ persist.
The indices $i$ such that $a_{i,2k}\ne 0$ are only $i=2k-1$ and $i=2k$.
Since $T_{2k}\cap T_{2k-1}=\emptyset$,
simultaneous collisions $f_i(t)=f_{2k}(t)=0$ can occur only if $a_{i,2k}=0$
or $i=2k$.

Let us verify \eqref{e:A-rule-gen} for $j=2k$ and all $t\in\R$. 
If $t\in T_{2k}$ then $f_{2k}'(t_-)=-1$ and $f_{2k}'(t_+)=1$.
This agrees with \eqref{e:A-rule-gen} since $a_{2k,2k}=1$.
For $t=x_s\in T_{2k-1}$, observe that $t$ is the midpoint
between $y_s$ and $y_{s+1}$,
hence it is a break point of $f_{2k}$ with
$f_{2k}'(t_-)=1$ and $f_{2k}'(t_+)=-1$.
The requirement (1) for $i=2k-1$ implies that $f_{2k-1}'(t_-)=-1$.
Since $a_{2k-1,2k}=-1$ and $a_{i,2k}=0$ for all $i<2k-1$,
these values agree with \eqref{e:A-rule-gen}.
Finally, if $t\notin T_{2k}\cup T_{2k-1}$ then 
it is not a break point of $f_{2k}$ and no $f_i$
with $a_{i,2k}\ne 0$ vanishes at~$t$.
Thus \eqref{e:A-rule-gen} is satisfied for $j=2k$ in all cases
and we have shown that $(f_1,\dots,f_{2k})$ is a generalized
$A_{2k}$-trajectory satisfying (1)--(4).

Now we construct $f_{2k+1}$.
Recall that $T_{2k-1}\cup T_{2k}$ is an arithmetic progression
of $2M+1$ elements starting at $y_1$ with common difference $\beta/2$.
We construct $f_{2k+1}$ from $T_{2k-1}\cup T_{2k}$ in the same way
as $f_{2k}$ is constructed from $T_{2k-1}$.
Namely define $T_{2k+1}=\{z_1,\dots,z_{2M+2}\}$ where
$$
 z_s = y_1+(s-\tfrac32)\tfrac\beta2
$$
and
$$
f_{2k+1}(t) = \dist(t, T_{2k+1} ) = \min \{ |t-z_s| : 1\le s\le 2M+2 \} .
$$
Note that $|T_{2k+1}|=2M+2=2^{k+1}+2^k-2$ verifying the induction step for~(4).
Using the fact that $a_{2k-1,2k+1}=a_{2k,2k+1}=-1$
and $a_{i,2k+1}=0$ for all $i<2k-1$,
we prove that $(f_1,\dots,f_{2k+1})$ is a generalized trajectory
satisfying all requirements by the same argument as in the
above proof for $(f_1,\dots,f_{2k})$.
\end{proof}

Now Lemma \ref{l:inductive-construction}, Lemma \ref{l:generalized-trajectory}, and Lemma \ref{l:from-A-to-balls}
imply that $\maxcol(m+1,3)\ge N$ where
\be\label{e:final-computation}
 N = \sum_{i=1}^m |T_i| =
  \begin{cases}
   2^{k+2}+2^{k-1}-3k-5 , &\qquad m=2k-1 , \\
   2^{k+2}+2^{k+1}-3k-6 , &\qquad m=2k .
  \end{cases}
\ee
One easily checks that $N\ge 2^k$ for all $m\ge 2$.
Since $k=\lfloor n/2 \rfloor$ for $n=m+1$, it follows that
$$
\maxcol(n,3)\ge 2^{\lfloor n/2 \rfloor}
$$
for all $n\ge 3$.

\end{document}